\theoremstyle{plain} 
\newtheorem{lemma}{Lemma}
\newtheorem{theorem}{Theorem}
\newtheorem{corollary}{Corollary}
\newcommand{\R}{\mathbb{R}}
\newcommand{\N}{\mathbb{N}}
\theoremstyle{definition}
\newtheorem{example}{Example}
\theoremstyle{remark}
\numberwithin{equation}{section}
\let\w=\omega
\let\e=\epsilon
\renewcommand{\ll}{\tilde{\lambda}}
\newcommand{\lx}[2]{\lambda(#2,\T^{#1})}
\newcommand{\hx}[2]{h(#2,\T^{#1})}
\newcommand{\hone}[1]{\hx{}{#1}}
\newcommand{\hn}[1]{\hx{n}{#1}}
\newcommand{\lone}[1]{\lx{}{#1}}
\renewcommand{\ln}[1]{\lx{n}{#1}}
\newcommand{\D}{D}
\newcommand{\qr}[1]{\ensuremath{\text{\eqref{#1}}}}
\newcommand{\ie}{i.e.\ }
\let\e=\epsilon
\newcommand{\Ordo}[1]{{O(#1)}} 
\newcommand{\ordo}[1]{{o(#1)}}
\newcommand{\lemref}[1]{Lemma~\ref{#1}}
\newcommand{\T}{\sigma}
\newcommand{\Mn}{\mathcal{M}_{\T^n}(\Sigma)}
\newcommand{\En}{\mathcal{E}_{\T^n}(\Sigma)}
\newcommand{\Mone}{\mathcal{M}_{\T}(\Sigma)}
\newcommand{\Eone}{\mathcal{E}_{\T}(\Sigma)}
\newcommand{\var}{\operatorname{var}}
\newcommand{\diam}{\operatorname{diam}}
\renewcommand{\dim}{\operatorname{dim}_H} 
\renewcommand{\d}{\,\mathrm{d}}
\newcommand{\A}{A}
\newcommand{\CF}{\mathcal{F}}
\newcommand{\CM}{\mathcal{M}}
\newcommand{\CA}{\mathcal{A}}
\newif\ifchd
\newcommand \listoftodos{\section*{Todo list} \@starttoc{tdo}}
\newcommand\l@todo[2]
\noindent \textit{#2}, \parbox{10cm}{#1}\par} \makeatother
\newcommand{\todo}[1]{%
\addcontentsline{tdo}{todo}{\protect{#1}}%
\begin{tikzpicture}[remember picture, baseline=-0.75ex]%
    \node [coordinate] (inText) {};
\end{tikzpicture}%
%
\marginpar{%
    \begin{tikzpicture}[remember picture]%
        \definecolor{orange}{rgb}{1,0.5,0}
        \draw node[draw=black, fill=orange, text width = 3cm] (inNote)
                 {\small #1};%
    \end{tikzpicture}%
}%
\begin{tikzpicture}[remember picture, overlay]%
    \draw[draw = orange, thick]
        ([yshift=-0.2cm] inText)
            -| ([xshift=-0.2cm] inNote.west)
            -| (inNote.west);
\end{tikzpicture}%
}%
\let\chd=\todo
\newcommand{\chd}[1]{}
\let\todo=\chd
\begin{document}

\title{Multifractal analysis of non-uniformly hyperbolic systems}
\author{A.\ Johansson, T.M.\ Jordan, A.\ \"Oberg and M.\ Pollicott}
\address{Anders Johansson \\ Divsion of Mathematics and Statistics\\
  University of G\"avle\\ SE-801 76 G\"avle\\ Sweden}
\email{ajj@hig.se}
\address{Thomas M.\ Jordan\\ Department of Mathematics\\ University of Bristol\\
  Bristol BS8 1TW, United Kingdom}
\email{thomas.jordan@bristol.ac.uk}
\address{Anders \"Oberg\\ Department of Mathematics\\ Uppsala University\\
  P.O.\ Box 480\\ SE-751 06 Uppsala\\ Sweden}
\email{anders@math.uu.se}
\address{Mark Pollicott\\ Mathematics Institute\\ University of Warwick\\
  Coventry CV4 7AL\\ United Kingdom}
\email{mpollic@maths.warwick.ac.uk}
\keywords{multifractal formalism, dimension theory, dynamical
  system} \subjclass [2000] {37C45 (37D25 28A80)}

\begin{abstract}
  We prove a multifractal formalism for Birkhoff averages of
  continuous functions in the case of some non-uniformly hyperbolic
  maps, which includes interval examples such as the
  Manneville--Pomeau map.
\end{abstract}

\maketitle

\section{Introduction and Notation}\noindent

In this paper we look at the multifractal analysis of some
non-uniformly hyperbolic maps. In particular we look at the problem
for the Birkhoff averages of continuous functions. This type of
problem is well understood in the hyperbolic case (see
\cite{BS},\cite{O},\cite{PW} for specific results and \cite{P} for an
introduction to the subject). However in the non-uniformly hyperbolic
case much less is known. The results known so far concerning Hausdorff
dimension of such spectra are limited to Lyapunov spectra
(\cite{GR},\cite{N},\cite{KS}) and local dimension of Gibbs' measures
(\cite{B}). Furthermore the methods cannot be applied to Birkhoff
averages for general continuous functions. In the case of general
continuous functions there are results for topological entropy
\cite{VerbitskyTakens}, but not for Hausdorff dimension. See also
\cite{BI} for some work on parabolic horseshoes. Finally there is work
for lcoal dimensions for countable state systems, \cite{HMU} which can
be related to parabolic systems through inducing schemes. In this
paper we produce results for the Hausdorff dimension which extend some
of the results of \cite{O} into the non-uniformly hyperbolic setting.

We begin with a classical example. Let $T:[0,1]\to [0,1]$ be the
Manneville--Pomeau map defined by $Tx = x + x^{1+\beta} \mod 1$,
where $0 < \beta < 1$. Let $f:[0,1]\to\R$ be continuous and define
\[
\Lambda_{\alpha}=
   \left\{x\in [0,1]:\lim_{n\to\infty}
     \frac{1}{n}\sum_{i=0}^{n-1}f(T^i x)= \alpha\right\}.
\] 
Let us denote, $\alpha_{\min}=\inf_{\mu\in\mathcal{E}} \{\int
f\d\mu\}$ and $\alpha_{\max}=\sup_{\mu\in\mathcal{E}}\{\int f
\d\mu\}$, where $\mathcal{E}=\mathcal{E}_T([0,1])$ denotes the space
of $T$-invariant ergodic probability measures.   

For $\alpha\in
[\alpha_{\min},\alpha_{\max}]\backslash \{f(0)\}$ we have
\[
\dim\Lambda_{\alpha}=\sup_{\mu\in\mathcal{M}_{T}([0,1])}
\left\{\frac{h(\mu,T)}{\int \log T^{\prime}(x)\d\mu}:\int
  f\d\mu=\alpha\right\},
\]
where $\mathcal{M}_T([0,1])$ denotes the $T$-invariant probability
measures. We can also show that $\dim\Lambda_{f(0)}=1.$


We can consider the related problem for iterated function
systems. Let $T_i:[0,1]\to [0,1]$, $1\leq i\leq m$, be $C^1$ maps
such that $T_i^{\prime}(x)>0$ and for distinct $i,j$, we have
$T_i(0,1)\cap T_j(0,1)=\emptyset$. At this stage the only additional
assumption we make is that $\text{diam}(T_{i_1}\circ \cdots \circ
T_{i_n}([0,1]))$ converges to $0$, uniformly in all sequences of
maps.

Let $\CA=\{1,\dots,m\}$ and let $\Sigma=\CA^\N$ be the one-sided
shift space on $m$ symbols, $\T: \Sigma\to\Sigma$ the usual shift
map and let $f:\Sigma \to {\R}$ be a continuous function.  Given $n
\geq 1$ we let $\A_n f( \w)=\frac{1}{n} \sum_{i=0}^{n-1} f(\T^i\w)$
denotes the $n$th level Birkhoff average of the function
$f:\Sigma\to\R$. Let $\Pi:\Sigma \to [0,1]$ be the natural
projection defined by
\[
\Pi(\w)=\lim_{n\to\infty} T_{\omega_1}\circ\cdots \circ
T_{\omega_n}(0), \; \; \omega \in \Sigma.
\]
Furthermore we can define the attractor of the system by
$\Lambda=\Pi(\Sigma)$. Note that by a fixed point in the iterated function system we mean the projection of a fixed point in the one-sided shift space.  We will consider the sets
\[
X_{\alpha}=\left\{\w\in \Sigma:\lim_{n\to\infty} \A_nf (\w)=
  \alpha\right\}
\]
and their images $\Pi (X_{\alpha})\subseteq\Lambda$.

Let $\tilde\Sigma$ be the subset of $\Sigma$ on which the
diameters tend to $0$ exponentially, i.e., let
\( \tilde\Sigma=\{\w: \liminf_{n\to\infty} \A_n g(\w) > 0 \}, \)
where $g(\w):= -\log\left| T'_{\omega_1}(\Pi(\w))\right|$.
We introduce the notation $\alpha_{\min}=\inf_{\mu\in\Eone} \{\int f\d\mu\}$ and
$\alpha_{\max}=\sup_{\mu\in\Eone}\{\int f \d\mu\}$. Let $\Mone$ denote the 
$\T$-invariant measures and let $\Eone$ denote the $\T$-ergodic measures. 
Denote the entropy of $\mu\in \Mone$ by $\hone\mu$ and the Lyapunov 
exponent by $\ln\mu$. We can now state our first result as follows.
\begin{theorem}\label{main}
  For $\alpha\in [\alpha_{\min},\alpha_{\max}]$ we have
\[ 
\dim \Pi (X_\alpha\cap\tilde{\Sigma})
   =\sup_{\mu\in\Mone}\left\{\frac{\hone\mu}{
    \lone\mu}:\int f\d\mu=\alpha\text{ and
  }\lone\mu>0\right\}.
\]
\end{theorem}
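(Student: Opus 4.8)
\emph{Overview.} Write $s_0$ for the right-hand side. The plan is to prove $\dim\Pi(X_\alpha\cap\tilde\Sigma)\le s_0$ and $\dim\Pi(X_\alpha\cap\tilde\Sigma)\ge s_0$ separately. Recurring ingredients are the chain-rule identity $\exp\big(-\sum_{i<n}g(\T^i\w)\big)=|(T_{\omega_1}\circ\cdots\circ T_{\omega_n})'(\Pi(\T^n\w))|$, from which, with the distortion kept under control, $\diam\Pi([\omega_1,\dots,\omega_n])=\exp\big(-\sum_{i<n}g(\T^i\w)+\ordo{n}\big)$ uniformly on any set where the cylinders contract exponentially, and the Ruelle--Margulis inequality $\hone\mu\le\lone\mu$ for $\mu\in\Mone$ (which holds because the uniform shrinking of the diameters forces $g\ge0$; it also shows $s_0\le1$).

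\emph{Upper bound.} Fix $s>s_0$. For $\delta>0$, $N,M\in\N$, $\eta>0$, set $Y_{\delta,N,M,\eta}=\{\w:\A_ng(\w)\ge\delta\text{ for }n\ge N,\ |\A_nf(\w)-\alpha|\le\eta\text{ for }n\ge M\}$; for every fixed $\eta$ one has $X_\alpha\cap\tilde\Sigma=\bigcup_{k,N,M}Y_{1/k,N,M,\eta}$, so by countable stability of $\dim$ it suffices to exhibit, for each $\delta$, an $\eta=\eta(\delta,s)$ with $\dim\Pi(Y_{\delta,N,M,\eta})\le s$ for all $N,M$. The crucial move is to cover $\Pi(Y_{\delta,N,M,\eta})$ not by cylinders of a fixed combinatorial length but by the images of the cylinders $C_{n(\w,\e)}(\w)$, where $n(\w,\e)$ is least with $\diam\Pi(C_n(\w))\le\e$: on $Y_{\delta,N,M,\eta}$ one has $n(\w,\e)\asymp|\log\e|$ with explicit constants, so these cylinders satisfy at once $\sum_{i<n}g(\T^i\cdot)\approx|\log\e|$ and $\A_ng(\cdot)\gtrsim\delta$, and --- because $n\to\infty$ while $\A_nf(\w)\to\alpha$ --- also $|\A_nf(\cdot)-\alpha|\lesssim\eta$. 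Then $\sum_{\mathrm{cover}}(\diam)^s\asymp\e^s\#\{\text{cover cylinders}\}$, and for each level $n$, $\e^s\#\{\text{cover cylinders of level }n\}\le\sum_{[\omega_1,\dots,\omega_n]:\A_nf\approx\alpha\pm\eta,\ \A_ng\ge\delta}e^{-s\sum_{i<n}g(\T^i\cdot)}\le\exp\big(nP_\delta^\eta(s)+\ordo{n}\big)$ by a standard counting (large-deviations) estimate, with $P_\delta^\eta(s)=\sup\{\hone\mu-s\lone\mu:|\int f\,d\mu-\alpha|\le\eta,\ \lone\mu\ge\delta\}$. Because $\mu\mapsto\hone\mu/\lone\mu$ is upper semicontinuous on the weak-$*$ compact set $\{\lone\mu\ge\delta\}$, this supremum decreases, as $\eta\downarrow0$, to $\sup\{\hone\mu/\lone\mu:\int f\,d\mu=\alpha,\ \lone\mu\ge\delta\}\le s_0<s$; fixing $\eta$ so that $P_\delta^\eta(s)<0$, the (geometric) sum over $n\asymp|\log\e|$ tends to $0$ as $\e\downarrow0$, whence $\mathcal H^s(\Pi(Y_{\delta,N,M,\eta}))=0$. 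Letting $s\downarrow s_0$ proves $\dim\Pi(X_\alpha\cap\tilde\Sigma)\le s_0$ (the degenerate case, where no admissible $\mu$ exists and thus $X_\alpha\cap\tilde\Sigma=\emptyset$, is included).

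\emph{Lower bound.} First I would reduce $s_0$ to an optimisation over finite convex combinations of positive-exponent ergodic measures: given $\mu\in\Mone$ with $\int f\,d\mu=\alpha$ and $\lone\mu>0$, decompose $\mu$ into ergodic components, discard those with exponent $\le\delta$ (which by the Ruelle inequality carry entropy $\le\delta$), discretise the decomposition, and add an arbitrarily small mass of a fixed positive-exponent ergodic measure to restore $\int f\,d\mu=\alpha$ exactly; this produces $\nu_1,\dots,\nu_k\in\Eone$ with $\lone{\nu_i}>0$ and weights $p_i\ge0$, $\sum_ip_i=1$, $\sum_ip_i\int f\,d\nu_i=\alpha$, and $(\sum_ip_i\hone{\nu_i})/(\sum_ip_i\lone{\nu_i})\ge\hone\mu/\lone\mu-\ordo{1}$, so the reduced optimisation again equals $s_0$. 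For a fixed such configuration and $\e>0$, I would then run the usual concatenation construction: for each $i$, by the Shannon--McMillan--Breiman theorem there are, for $n$ large, at least $e^{n(\hone{\nu_i}-\e)}$ level-$n$ ``$\nu_i$-generic'' cylinders on which $\A_nf\approx\int f\,d\nu_i$ and $\sum_{j<n}g(\T^j\cdot)\approx n\lone{\nu_i}$; concatenating these, using the $i$-th type in proportion $p_i$ and grouping the blocks into short sub-rounds so that the Birkhoff averages \emph{converge} (to $\sum_ip_i\int f\,d\nu_i=\alpha$ for $f$ and to $\sum_ip_i\lone{\nu_i}>0$ for $g$) rather than merely oscillate, and letting the approximation parameter tend to $0$ along the construction, produces a compact Moran set $M\subseteq X_\alpha\cap\tilde\Sigma$. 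On $M$ every cylinder has $\diam\Pi(\cdot)\asymp e^{-\sum g(\T^j\cdot)}$ with the ratio between successive generations bounded above and below ($g$ being bounded), so a ball of radius $r$ meets only $\Ordo{1}$ cylinders of diameter $\asymp r$, and the natural Bernoulli measure on $M$ gives such a cylinder mass $\asymp r^{\theta-\ordo{1}}$ with $\theta=(\sum_ip_i\hone{\nu_i})/(\sum_ip_i\lone{\nu_i})$; the mass distribution principle then yields $\dim\Pi(M)\ge\theta-\e$. Optimising over configurations and letting $\e\downarrow0$ gives $\dim\Pi(X_\alpha\cap\tilde\Sigma)\ge s_0$, completing the proof.

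\emph{Where the difficulty lies.} The essential obstacle, absent in the uniformly hyperbolic case, is that invariant measures may have arbitrarily small or zero Lyapunov exponent --- which is precisely why one intersects with $\tilde\Sigma$. In the upper bound this is why the cover must be taken at a fixed spatial scale and why the \emph{exact} convergence $\A_nf\to\alpha$ must be used: only then is the constrained pressure taken over measures of exponent bounded below, so that it shrinks down to $s_0$ and not to the strictly larger quantity that measures of vanishing exponent (whose generic points do not lie in $X_\alpha$) would otherwise force. In the lower bound the same phenomenon is handled through the Ruelle-inequality pruning of low-exponent components and by performing the Moran construction inside the uniformly hyperbolic pieces $\{\w:\liminf_n\A_ng(\w)>\delta\}$; the endpoint values $\alpha\in\{\alpha_{\min},\alpha_{\max}\}$, at which the auxiliary positive-exponent measure used to restore $\int f=\alpha$ may degenerate, require separate treatment.
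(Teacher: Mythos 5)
Your outline is correct and reaches the right conclusion, but it departs from the paper's proof in both directions, most substantially in the lower bound.

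For the upper bound, your ``cover at a fixed spatial scale $\e$, then bound a constrained partition sum by a pressure $P_\delta^\eta(s)=\sup\{\hone\mu-s\lone\mu:\cdots\}$'' is, once the counting estimate is unpacked, essentially the same mechanism the paper uses, but packaged in the language of constrained pressure and large deviations. The paper avoids this vocabulary deliberately (the introduction advertises that the methods ``do not involve either thermodynamic formalism or the use of large deviation theory''): it covers by level-$n$ cylinders for each fixed $n$, defines $s_n$ by the Moran equation $\sum_{Y_n}D_n^{s_n}=1$, observes that the normalised weights form a probability $\nu_n$ on $n$-blocks whose Shannon entropy obeys the \emph{identity} $H(\nu_n)=ns_n\int\ll_n\,\d\nu_n$, extends $\nu_n$ to an $n$th-level Bernoulli measure and pushes it to a $\T$-ergodic measure via Lemma~\ref{decyclo}. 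This gives $s_n\le(1-\e)^{-1}\hone{\mu}/\lone\mu$ directly, with no appeal to any counting or pressure estimate that would itself have to be proved for merely continuous $f,g$. If you keep the large-deviations formulation you owe the reader a proof of the constrained counting bound; writing it out will reproduce the paper's computation.

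For the lower bound you run the traditional route: ergodic decomposition, prune components of exponent $\le\delta$ using Ruelle's inequality, discretise, restore $\int f\,\d\mu=\alpha$ by mixing in a small auxiliary ergodic measure, and then a Shannon--McMillan--Breiman based Moran concatenation with the mass distribution principle. The paper bypasses all of this with a single approximation lemma (Lemma~\ref{approxim}): any $\mu\in\Mone$ is weak-$*$ approximated by $n$th-level Bernoulli measures $\mu_n$ (which are $\T^n$-ergodic) with $\frac1n\hn{\mu_n}\to\hone\mu$, $\frac1n\ln{\mu_n}\to\lone\mu$ and, for interior $\alpha$, $\int\A_nf\,\d\mu_n=\alpha$ exactly. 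Feeding these through Lemma~\ref{decyclo} and Hofbauer--Raith (Lemma~\ref{hr}) gives the lower bound immediately, with the endpoints $\alpha\in\{\alpha_{\min},\alpha_{\max}\}$ handled by a short ergodic-decomposition remark. This is considerably more economical: no Moran construction, no bounded-geometry/ball-intersection estimates, and no Ruelle inequality. One factual slip in your write-up: uniform shrinking of the diameters does \emph{not} force $g\ge0$ pointwise (some $T_i'$ may exceed $1$ locally even though $T_i([0,1])$ is a proper subinterval); Ruelle's inequality $\hone\mu\le\max(0,\lone\mu)$ still holds here via the factor onto the $C^1$ expanding interval map, so your pruning step survives, but the stated justification is wrong. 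You correctly flag that the endpoint case needs special handling, which matches the paper.
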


We now consider the specific prototype case where
$T_1,\ldots,T_m:[0,1]\to [0,1]$ are $C^1$ maps with fixed points
$(x_1,\ldots,x_m)$ such that $T_i(x_i)=x_i$, where $x_i =
\Pi(i,i,i,\dots)$, $i\in\mathcal A$. We assume that
$T^{\prime}_i(x_i)\leq 1$ and $0<T^{\prime}_i(x)<1$ everywhere else,
and for distinct $i,j$, we have $T_i(0,1)\cap
T_j(0,1)=\emptyset$. Furthermore we assume that for any $\epsilon>0$
there exists a $\mu\in \Mone$ with $\lone\mu>0$ and
$\frac{\hone\mu}{\lone\mu}\geq\dim\Lambda-\epsilon$. In other words we
have a system with a finite number of parabolic fixed points with
hyperbolic measures with dimension arbitrarily close to that of the
attractor. If the maps $T_i$ are all $C^{1+s}$ for some $s>0$ then we
can deduce from Theorem 4.6 in \cite{U} that this condition is
satisfied. This is reminiscent of Katok's result on approximation in
topological entropy by hyperbolic horseshoes, \cite{Katok}.  We can
use Theorem \ref{main} to deal with the cases where the Lypaunov
exponent is nonzero. We use a method similar to the work of Gelfert
and Rams \cite{GR} to deal with the cases where the Lyapunov exponent
can be zero. Let $\mathcal I \subset \mathcal A$ represent the set of
\emph{indifferent fixed points} so that $T'_i(x_i)=1$ whenever $i\in
\mathcal I$. For $i\in \mathcal I$, let
$\alpha_i=f(i,i,i,\ldots)$ and let
\begin{equation}\label{A}
  A=[\min_{i\in\mathcal I}\{\alpha_i\},\max_{i \in\mathcal I}\{\alpha_i\}].
\end{equation}
\begin{theorem}\label{main2}
  Assume that the iterated function system has a finite number of
  indifferent fixed points as above and that for any $\epsilon>0$ there 
exists a measure $\mu\in\Mone$ with $\lone\mu>0$ and 
$\frac{\hone\mu}{\lone\mu}\geq\dim\Lambda-\epsilon$. Then, for $\alpha\in
  [\alpha_{\text{min}},\alpha_{\text{max}}]\setminus A$ we have
\[ 
\dim \Pi(X_{\alpha})=\sup_{\mu\in\Mone}\left\{\frac{\hone\mu}{\lone\mu}:
 \int f\d\mu=\alpha\right\},
\]
and $\dim \Pi (X_{\alpha})=\dim\Lambda$ for all $\alpha\in A$.
\end{theorem}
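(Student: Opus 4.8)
The proof splits into the cases $\alpha\notin A$ and $\alpha\in A$; the first is essentially a corollary of \thmref{main}. For $\alpha\in[\alpha_{\min},\alpha_{\max}]\setminus A$ the key observation is that $X_\alpha\setminus\tilde\Sigma=\emptyset$, so $X_\alpha=X_\alpha\cap\tilde\Sigma$. Indeed, if $\w\in X_\alpha$ had $\liminf_n\A_ng(\w)=0$, choose $n_k\to\infty$ with $\A_{n_k}g(\w)\to0$ and pass to a further subsequence along which the empirical measures $\tfrac1{n_k}\sum_{j<n_k}\delta_{\T^j\w}$ converge weak-$*$ to some $\nu$; the usual Krylov--Bogolyubov argument gives $\nu\in\Mone$, and since $f,g$ are bounded and continuous, $\int g\d\nu=0$ and $\int f\d\nu=\alpha$. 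But $g\ge0$ vanishes precisely at the fixed sequences $(i,i,\dots)$, $i\in\mathcal I$, so $\int g\d\nu=0$ together with invariance of $\nu$ forces $\nu$ to be a convex combination of the corresponding Dirac measures, whence $\int f\d\nu\in A$ --- contradicting $\alpha\notin A$. The same computation applied to any $\mu\in\Mone$ with $\int f\d\mu=\alpha$ shows $\lone\mu>0$ automatically, so the constraint $\lone\mu>0$ in \thmref{main} is vacuous here, and \thmref{main} yields
\[
\dim\Pi(X_\alpha)=\dim\Pi(X_\alpha\cap\tilde\Sigma)=\sup_{\mu\in\Mone}\left\{\frac{\hone\mu}{\lone\mu}:\int f\d\mu=\alpha\right\},
\]
which is the asserted formula.

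Now let $\alpha\in A$. The upper bound is trivial: $\Pi(X_\alpha)\subseteq\Lambda$ gives $\dim\Pi(X_\alpha)\le\dim\Lambda$. For the lower bound, fix $\e>0$ and, by hypothesis, an ergodic $\mu\in\Mone$ with $\lone\mu>0$ and $\hone\mu/\lone\mu\ge\dim\Lambda-\e$ (pass to an ergodic component; components with zero Lyapunov exponent are parabolic Diracs and carry no entropy, so can be dropped without decreasing the ratio). Write $\alpha=\sum_{i\in\mathcal I}p_i\alpha_i$ with $p_i\ge0$, $\sum p_i=1$, possible since $\alpha\in A$. Following Gelfert and Rams, build a Moran subset $E\subseteq X_\alpha$ by concatenating \emph{dimension blocks} --- $\mu$-typical words of lengths $m_1,m_2,\dots$, a block of length $m$ admitting $\approx e^{\hone\mu\,m}$ choices each of $\mu$-measure $\approx e^{-\hone\mu\,m}$ and producing a subinterval of $\Lambda$ of diameter $\approx e^{-\lone\mu\,m}$ --- with \emph{control blocks}, the $n$-th being a fixed deterministic word of length $L_n$ made of a run of $\lfloor p_iL_n\rfloor$ copies of $i$ for each $i\in\mathcal I$. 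Control blocks serve two purposes. Since $f(i,i,\dots)=\alpha_i$ and $f$ is continuous, the Birkhoff sum of $f$ over the $n$-th control block is $\alpha L_n+o(L_n)$, so if $m_n/L_n\to0$ then $\A_Nf(\w)\to\alpha$ for all $\w\in E$, i.e.\ $E\subseteq X_\alpha$ (one also makes the $L_n$ grow slowly enough that this holds for every $N$, not only at block ends). On the other hand, because $T_i'(x_i)=1$, a run of $\ell$ symbols $i$ contracts only sub-exponentially: the orbit converges to $x_i$, so $\tfrac1\ell\sum_{k<\ell}\log|T_i'(\cdot)|\to0$, whence the $n$-th control block shrinks diameters only by a factor $e^{-o(L_n)}$. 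Choosing $m_n$ between this $o(L_n)$ and $L_n$ --- possible precisely because the former is $o(L_n)$ --- makes the control blocks negligible for the diameter estimate but dominant for the Birkhoff average.

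Equip $E$ with the natural measure $m$ (uniform among the choices in each dimension block), and write $M_n=m_1+\dots+m_n$. A cylinder ending at the close of the $n$-th dimension block has $m$-measure $\approx e^{-\hone\mu\,M_n}$ and corresponds to a subinterval of $\Lambda$ of diameter $\approx e^{-\lone\mu\,M_n(1+o(1))}$, the $o(1)$ absorbing the cumulative $o(L_j)$ losses in the control blocks. A mass distribution argument --- whose worst scales are those at the \emph{start} of a control block, where $m$ stays constant across an entire multiplicative range of diameters --- then gives $m(B(x,r))\le Cr^s$ for every $s<\hone\mu/\lone\mu$, so $\dim\Pi(E)\ge\hone\mu/\lone\mu\ge\dim\Lambda-\e$; letting $\e\to0$ and combining with the trivial upper bound gives $\dim\Pi(X_\alpha)=\dim\Lambda$. (For $\alpha$ in the relative interior of $A$ one can bypass this construction: if $\mu$ is near-optimal with $\int f\d\mu=\beta$, then $\beta$ lies on one side of $\alpha$, say $\beta\le\alpha<\alpha_{i_+}:=\max_{i\in\mathcal I}\alpha_i$, and $\mu_t=t\mu+(1-t)\delta_{(i_+,i_+,\dots)}$ with $t=(\alpha_{i_+}-\alpha)/(\alpha_{i_+}-\beta)$ satisfies $\int f\d\mu_t=\alpha$, $\lone{\mu_t}=t\lone\mu>0$ and $\hone{\mu_t}/\lone{\mu_t}=\hone\mu/\lone\mu$, because entropy and Lyapunov exponent are affine in the measure and vanish on the Dirac; so the supremum in \thmref{main} at this $\alpha$ already equals $\dim\Lambda$, and \thmref{main} applies directly.)

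The main obstacle is the Moran construction, and within it the mass-distribution estimate at scales lying strictly inside a long parabolic control block: there $m$ is constant while the diameter sweeps through a whole multiplicative window, and one must check that the associated exponent never drops below $\hone\mu/\lone\mu$. This is exactly what the sub-exponential contraction at the indifferent fixed points buys, and making precise the interplay of the two length sequences --- the $L_n$ growing slowly enough to keep $\A_Nf\to\alpha$ along all $N$, the $m_n$ sitting above the $o(L_n)$ diameter loss yet below $L_n$ --- is the technical heart of the argument. A little bounded-distortion care inside the dimension blocks is also needed, but this is of the same nature as in the proof of \thmref{main} and can be borrowed from there.
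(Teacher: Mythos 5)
Your argument is correct and follows essentially the same route as the paper's proof: for $\alpha\notin A$ you reduce to Theorem~\ref{main} by showing $X_\alpha\subseteq\tilde\Sigma$ and that $\lone\mu>0$ is automatic (the paper does this with the elementary pigeonhole Lemma~\ref{zl}, you with a Krylov--Bogolyubov weak-$*$ limit; both work), and for $\alpha\in A$ you run the Gelfert--Rams concatenation of long parabolic control blocks with comparatively short hyperbolic dimension blocks, calibrated so the control-block diameter loss is negligible against the dimension-block length while the control block still dominates the Birkhoff average. The only difference in packaging is that the paper handles the interior of $A$ by the cheaper convex-combination trick (its Lemma~\ref{interior} combined with Lemmas~\ref{decyclo}, \ref{approxim} and \ref{hr}) and reserves the concatenation construction for the endpoints $\{a_1,a_2\}$, whereas you run the construction for all of $A$ at once (with multi-symbol control blocks realising $\alpha=\sum p_i\alpha_i$) and mention the interior shortcut only as an aside; also the paper estimates the local dimension of an explicitly constructed measure $\eta$ rather than phrasing things via a Moran set and mass distribution, but these are the same computation. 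One small phrasing quibble: within a control block the exponent $\log m(B(x,r))/\log r$ is worst at the \emph{end} of the block (where the accumulated sub-exponential diameter loss is largest while the mass is unchanged), not at the start; your calibration $c_nL_n\ll m_n\ll L_n$ is exactly what makes that worst case converge to $\hone\mu/\lone\mu$, matching the paper's condition $k_i\to\infty$, $k_i\e_i\to0$.
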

It is straightforward to deduce that in this case $\dim\Pi
(X_{\alpha})$ is a continuous function of $\alpha$ with the possible
exception of the endpoints of $A$:
\begin{corollary}
  The function $r:[\alpha_{\min},\alpha_{\max}]\to\R$ defined by
  $r(\alpha)=\dim\Pi (X_{\alpha})$ is constant in the
  interior of $A$ and continuous in $[\alpha_{\min},\alpha_{\max}]\backslash
  A$.
\end{corollary}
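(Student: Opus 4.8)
The plan is to derive the statement directly from the formula in \thmref{main2}, treating the interior of $A$, the set $[\alpha_{\min},\alpha_{\max}]\setminus A$, and (if needed) the whole interior of $[\alpha_{\min},\alpha_{\max}]$ separately. Constancy in the interior of $A$ is immediate: by \thmref{main2} we have $r(\alpha) = \dim\Lambda$ for every $\alpha \in A$, so in particular $r$ is constant on $A$ and hence on its interior.

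For continuity on $[\alpha_{\min},\alpha_{\max}]\setminus A$, I would first observe that this set is a (relatively open) union of at most two intervals, one to the left of $A$ and one to the right, and that on each of them \thmref{main2} gives the variational formula
\[
r(\alpha)=\sup_{\mu\in\Mone}\left\{\frac{\hone\mu}{\lone\mu}:\int f\d\mu=\alpha\right\}.
\]
The natural approach is to show this supremum is a concave function of $\alpha$ on each connected component, since a finite concave function on an interval is automatically continuous on its interior, and one then handles the (at most one) endpoint that is not an endpoint of $A$ by a separate semicontinuity argument. Concavity would follow from a standard convex-combination argument: given $\mu_1,\mu_2\in\Mone$ with $\int f\d\mu_j=\alpha_j$, the measure $\mu_t=(1-t)\mu_1+t\mu_2$ is in $\Mone$ with $\int f\d\mu_t=(1-t)\alpha_1+t\alpha_2$, and by affinity of entropy and of the Lyapunov exponent in $\mu$, together with the elementary inequality $\frac{a_1+a_2}{b_1+b_2}\ge\min\{\frac{a_1}{b_1},\frac{a_2}{b_2}\}$ applied to weighted entropies and Lyapunov exponents, one concludes $\frac{\hone{\mu_t}}{\lone{\mu_t}}\ge(1-t)\frac{\hone{\mu_1}}{\lone{\mu_1}}+t\frac{\hone{\mu_2}}{\lone{\mu_2}}$ after a short computation; taking suprema gives mid-point concavity of $r$, hence concavity.

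The remaining point is the behaviour at $\alpha_{\min}$ and $\alpha_{\max}$ themselves and at the endpoints of $A$ that the corollary explicitly excludes; for the former, continuity at $\alpha_{\min}$ and $\alpha_{\max}$ follows because a concave function on a closed bounded interval is upper semicontinuous from inside and the value at the endpoint is realized by an extremal measure, so no jump can occur there. I expect the main obstacle to be a clean treatment of the set over which the supremum is taken becoming empty or the Lyapunov exponent degenerating near the boundary of $A$: one must argue that as $\alpha$ approaches an endpoint of $A$ from outside, the maximizing measures can be chosen with $\lone\mu$ bounded away from $0$, so that the ratio stays controlled and the concave-function continuity argument applies on the closed component up to (but not including) that endpoint. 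This is exactly why the endpoints of $A$ are excluded from the continuity claim, and making this precise — rather than the concavity itself — is where the real work lies.
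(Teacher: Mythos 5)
Your proposal for the constancy on the interior of $A$ is fine and matches the paper. The continuity argument, however, has a genuine gap: the concavity of $r$ on each component of $[\alpha_{\min},\alpha_{\max}]\setminus A$ that you try to establish does not in fact follow from the mediant inequality, and is generally false. If $\mu_t=(1-t)\mu_1+t\mu_2$ then, by affinity of entropy and Lyapunov exponent,
\[
\frac{\hone{\mu_t}}{\lone{\mu_t}}=\frac{(1-t)\hone{\mu_1}+t\,\hone{\mu_2}}{(1-t)\lone{\mu_1}+t\,\lone{\mu_2}},
\]
and the mediant inequality only gives that this ratio lies between $\min$ and $\max$ of $\hone{\mu_1}/\lone{\mu_1}$ and $\hone{\mu_2}/\lone{\mu_2}$; it does \emph{not} give the convex combination $(1-t)\frac{\hone{\mu_1}}{\lone{\mu_1}}+t\frac{\hone{\mu_2}}{\lone{\mu_2}}$. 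A concrete counterexample: take $\hone{\mu_1}=\hone{\mu_2}=1$, $\lone{\mu_1}=1$, $\lone{\mu_2}=2$, $t=1/2$; the left side is $2/3$ while the right side is $3/4$. What the mediant inequality actually gives is only quasi-concavity of $r$ (superlevel sets are intervals), and quasi-concave functions on an interval can have jump discontinuities (e.g.\ monotone step functions), so continuity does not follow. This is not a cosmetic issue: the ``short computation'' you deferred is exactly the step that fails.

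The paper proves continuity on $[\alpha_{\min},\alpha_{\max}]\setminus A$ by proving upper and lower semicontinuity of $r$ at each such $\alpha$ separately, rather than via any concavity. Upper semicontinuity, $r(\alpha)\geq\limsup_n r(\beta_n)$ for $\beta_n\to\alpha$, comes from the upper semicontinuity of entropy under weak$^{\star}$ limits together with the continuity of $\lone{\cdot}$: take near-optimal measures $\mu_n$ for each $\beta_n$ and pass to a weak$^{\star}$ limit. For lower semicontinuity, fix $\epsilon>0$ and a measure $\mu$ with $\int f\d\mu=\alpha$ and $\hone{\mu}/\lone{\mu}>r(\alpha)-\epsilon$, pick $\mu_1,\mu_2\in\Mone$ with $\int f\d\mu_1=\alpha_{\min}$, $\int f\d\mu_2=\alpha_{\max}$, and form convex perturbations $\nu_n=p_n\mu_i+(1-p_n)\mu$ with $p_n\searrow 0$ chosen so that $\int f\d\nu_n=\beta_n$; by affinity of $\hone{\cdot}$ and $\lone{\cdot}$, $\hone{\nu_n}/\lone{\nu_n}\to\hone{\mu}/\lone{\mu}$, which bounds $\liminf_n r(\beta_n)$ from below by $r(\alpha)-\epsilon$. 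You should replace your concavity step with a perturbation/semicontinuity argument of this kind.
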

\begin{proof}
  Since $r$ is clearly constant in the interior of $A$ we just
  consider $\alpha\in [\alpha_{\min},\alpha_{\max}]\setminus A$. To
  start let $\mu_1,\mu_2\in\Mone$ such that $\int
  f\d\mu_1=\alpha_{\min}$ and $\int f\d\mu_2=\alpha_{\max}$. For
  $\alpha\in[\alpha_{\min},\alpha_{\max}]\backslash A$ consider a
  sequence $\{\beta_n\}_{n\in\N}$ such that $\beta_n\rightarrow\alpha$
  as $n\rightarrow\infty$. It follows that
  $r(\alpha)\geq\limsup_{n\rightarrow\infty}r(\beta_n)$ by upper
  semicontinuity of entropy (see Theorem 8.2 \cite{W}). For
  $\epsilon>0$ we let $\mu$ satisfy $\int f\d\mu=\alpha$ and
  $\frac{\hone\mu}{\lone\mu}>r(\alpha)-\epsilon$. By considering
  measures $\nu_n$ such that $\int f\d\nu_n = \beta_n$ of the form
  $\nu_n=p_n\mu_1+(1-p_n)\mu$ or $\nu_n=p_n\mu_2+(1-p_n)\mu$ for
  appropriate $p_n\!\!\searrow\!0$, we can deduce that
  $r(\alpha)\leq\liminf_{n\rightarrow\infty} r(\beta_n)$. 
\end{proof}
These results are well understood in the case of uniformly
contracting systems (see \cite{PW},\cite{O},\cite{BS}). The novelty
in this work is that we can analyse certain non-uniformly hyperbolic
systems. Moreover, our methods do not involve either thermodynamic
formalism or the use of large deviation theory. For an introduction
to dimension theory and multifractal analysis the reader is referred
to \cite{P}. All the necessary definitions and results from ergodic
theory can be found in \cite{W}.

We can also use Theorem \ref{main2} to deduce a result which applies
to non-uniformly expanding maps of the interval (such as the
Manneville--Pomeau map mentioned earlier).
\begin{corollary}\label{nue}
  Let $T:[0,1]\to [0,1]$ be a piecewise onto $C^1$ map
  with a finite number of parabolic fixed points $x_i$ such that
  $T(x_i)=x_i$ and $T'(x_i)=1$ but $T'(x)>1$ for $x\in
  [0,1]\backslash\cup_i x_i$. We also assume the existence of a 
hyperbolic measure with dimension arbitrarily close to $1$. Let 
$f:[0,1]\to\R$ be continuous and let
  \[
  \Lambda_{\alpha}=\{x\in [0,1]:\lim_{n\to\infty}
  \frac{1}{n}\sum_{i=0}^{n-1}f(T^i x)= \alpha\}.
  \]
  If we let $A=[\min_i \{f(x_i)\},\max_i\{f(x_i)\}]$ then for
  $\alpha\in [\alpha_{\min},\alpha_{\max}]\backslash A$ we have
  \[
  \dim\Lambda_{\alpha}=\sup_{\mu\in\mathcal{M}_{T}([0,1])}
  \left\{\frac{h(\mu,T)}{\int \log T^{\prime}(x)\d\mu(x)}:\int
    f\d\mu=\alpha\right\}.
  \]
  We also have that for all $\alpha\in A$ 
  \[\dim\Lambda_{\alpha}=1.\]
\end{corollary}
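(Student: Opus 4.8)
The plan is to deduce this corollary from Theorem \ref{main2} by realizing the interval map $T$ as (essentially) the inverse of an iterated function system of the type considered there. First I would let $\{I_1,\dots,I_m\}$ be the partition of $[0,1]$ into the monotonicity intervals of $T$, so that each restriction $T|_{I_j}\colon I_j \to [0,1]$ is a $C^1$ bijection, and set $T_j = (T|_{I_j})^{-1}\colon [0,1]\to I_j$. Then each $T_j$ is $C^1$ with $0 < T_j' < 1$ away from the points corresponding to parabolic fixed points of $T$, where $T_j'$ equals $1$; the images $T_j((0,1)) = \inr I_j$ are pairwise disjoint; and the contraction diameters $\diam(T_{i_1}\circ\cdots\circ T_{i_n}([0,1]))$ tend to $0$ uniformly (this is the standard bounded-distortion/expansivity consequence of $T'>1$ off the finitely many parabolic points, which I would record as a short lemma or cite). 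Crucially, the parabolic fixed points $x_i$ of $T$ become exactly the indifferent fixed points of the IFS, and the hypothesis that there is a hyperbolic measure of dimension arbitrarily close to $1$ is precisely the hypothesis required in Theorem \ref{main2} (noting $\dim\Lambda = 1$ here since $T$ is piecewise onto, so $\Lambda = [0,1]$).

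Next I would match up the dynamical quantities on the two sides. Writing $\Pi\colon\Sigma\to[0,1]$ for the coding map of the IFS, one has $\Pi\circ\T = T\circ\Pi$, so the level sets correspond: $\Pi(X_\alpha) = \Lambda_\alpha$ up to the countable set of points with non-unique codings, which has Hausdorff dimension $0$ and hence does not affect $\dim$. Here I should be slightly careful that $X_\alpha$ is defined via Birkhoff averages of $f$ on $\Sigma$ while $\Lambda_\alpha$ uses Birkhoff averages of $f$ on $[0,1]$; since $f$ on $\Sigma$ is taken to be $f\circ\Pi$ (the natural lift), continuity of $f$ and the semiconjugacy make these averages agree along orbits, so the identification $\Pi(X_\alpha)=\Lambda_\alpha$ (mod a dimension-zero set) goes through. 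Likewise, the variational problem transfers: $\T$-invariant measures $\mu$ on $\Sigma$ push forward to $T$-invariant measures $\Pi_*\mu$ on $[0,1]$ and conversely every $T$-invariant measure lifts (uniquely off the dimension-zero bad set) to a $\T$-invariant one; entropy is preserved, $h(\mu,\T) = h(\Pi_*\mu, T)$, and the Lyapunov exponents match, $\lone\mu = \int -\log|T_{\omega_1}'(\Pi\w)|\d\mu(\w) = \int \log T'(x)\d\Pi_*\mu(x)$, by the chain rule for the inverse branches. With $A$ in Theorem \ref{main2} being $[\min_{i}\alpha_i,\max_i\alpha_i]$ where $\alpha_i = f(i,i,i,\dots) = f(x_i)$, this is exactly the set $A=[\min_i f(x_i),\max_i f(x_i)]$ in the corollary.

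Putting these identifications together, Theorem \ref{main2} applied to the IFS $\{T_j\}$ gives, for $\alpha\in[\alpha_{\min},\alpha_{\max}]\setminus A$,
\[
\dim\Lambda_\alpha = \dim\Pi(X_\alpha) = \sup_{\mu\in\Mone}\left\{\frac{\hone\mu}{\lone\mu} : \int f\d\mu = \alpha\right\} = \sup_{\nu\in\mathcal{M}_T([0,1])}\left\{\frac{h(\nu,T)}{\int\log T'(x)\d\nu(x)} : \int f\d\nu = \alpha\right\},
\]
and $\dim\Lambda_\alpha = \dim\Lambda = 1$ for $\alpha\in A$. The main obstacle I anticipate is not conceptual but bookkeeping: verifying rigorously that the set of points with multiple IFS-codings — and the corresponding measures supported there — can be discarded without changing either the Hausdorff dimension of the level sets or the value of the supremum, and checking that the uniform-contraction hypothesis of the IFS framework genuinely follows from "$T'>1$ off finitely many parabolic points" (this needs a little care near the parabolic points, where one uses that distinct symbols give disjoint image intervals so no infinite word stays forever near a single parabolic point unless it is the constant word). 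Once these routine-but-delicate points are dispatched, the corollary is immediate.
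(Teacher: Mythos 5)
Your proposal is correct and takes essentially the same approach as the paper, which simply observes that Theorem \ref{main2} applies to the IFS of inverse branches $T_j=(T|_{I_j})^{-1}$; you have merely spelled out the routine identifications (correspondence of level sets and measures, matching of entropy and Lyapunov exponents, and the countable overlap set having zero dimension) that the paper leaves implicit.
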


\begin{proof}
  This follows by noting that Theorem \ref{main2} can be applied to
  the iterated function system defined by the inverse branches of
  this map.
\end{proof}
Without the assumption of the existence of a hyperbolic measure of 
dimension arbitrarily close to $1$ the result would be the same 
except that we would no longer have equality for $\alpha\in A$ but 
that the dimension is bigger than the supremum of the dimension of 
hyperbolic measures and less than the dimension of the attractor. 
We don't know of any examples where this situation occurs. It is also possible to generalise the result to Markov maps however we just work in the Bernoulli case to ease the exposition. It follows 
from Theorem 4.6 in \cite{U} that for any such system where the inverse 
branches are $C^{1+s}$ for $s>0$ that this hypothesis is satisfied. 
We now give some examples to illustrate this corollary and the
difference of the result from the expanding case.

\begin{example}
  The Manneville--Pomeau map is known to have a finite $T$-invariant
  absolutely continuous probability measure (we denote this measure by
  $\mu_{SRB}$) in the case when $0<\beta<1$. For $\beta\geq 1$ there 
is no $T$-invariant absolutely continuous probability measure but 
there are measures of dimension arbitrarily close to $1$. 
    So  provided $\beta >0$ it  satisfies the hypotheses
  of the Corollary \ref{nue}.  Let $f:[0,1]\to\R$ be a continuous
  function such that $\int f \d\mu_{SRB}>f(0)=\alpha_{\min}$.  Then
  for $\beta \in (0,1)$ we have that
  \[
  \dim\Lambda_{\alpha}\left\{\begin{array}{lll}=1&\text{ for
      }&\alpha\in
      \left[f(0),\int f\d\mu_{SRB}\right]\\
      <1&\text{ for }&\alpha\in\left[\int
        f\d\mu_{SRB},\alpha_{\max}\right]
    \end{array}\right.
  \]
  In the case where $\beta > 1$ we have that $\dim\Lambda_{f(0)}=1$
  and $\dim\Lambda_{\alpha}<1$ for $\alpha\in(f(0),\alpha_{\max})$.
\end{example}
\begin{example}
  Let $T:[0,1]\rightarrow [0,1]$ be defined by
  \[
  T(x)=\left\{\begin{array}{lll}\frac{x}{1-x}&\text{ for }&0\leq
      x\leq\frac{1}{2}\\\frac{2x-1}{x}&\text{ for
      }&\frac{1}{2}<x\leq 1\end{array}.\right.
  \]
  Thus $T$ has parabolic fixed points at $0$ and $1$ but is
  expanding everywhere else. There are no absolutely continuous 
$T$-invariant probability measures but there are $T$-invariant measures 
of dimension arbitrarily close to $1$.  Hence for any $f:[0,1]\rightarrow\R$
  which is continuous we can apply Corollary \ref{nue}. In the case
  where $f$ is monotone increasing then
  $A=[f(0),f(1)]=[\alpha_{\min},\alpha_{\max}]$ and $\dim
  \Lambda_{\alpha}=1$ for all $\alpha\in
  [\alpha_{\min},\alpha_{\max}]$.
\end{example}
The layout of the rest of the paper is as follows. In the next three
sections we give the proof of Theorem \ref{main}. In the final
section we go on to deduce Theorem \ref{main2}.

\section{Preliminary Results}
In this section we prove the basic lemmas needed to prove Theorem
\ref{main}. These involve an approximation result and methods of
invariant and ergodic invariant measures. We will use $\Mn$, and $\En$ to
denote the $\T^n$-invariant and $\T^n$-ergodic measures respectively. For
$\mu\in\Mn$ let $\hn{\mu}$ denote the entropy of $\mu$ with
respect to $\T^n$. Let $\CF_n$ be the finite algebra generated by
the $n$-cylinders. Since, for each $n$, $\CF_n$ is a generating
partition for $\T^{n}$, we have for $\mu\in\Mn$ that $\hn{\mu} =
n \hone \mu$ where
\[
\hone\mu=\lim_{N\to\infty}\frac 1N \,H\left(\mu\vert_{\CF_N}\right)
\]
and $H(\nu)$ denotes the Shannon entropy
\[ H(\nu) = \sum_{A \in \CA} \nu(A)\log(1/\nu(A)), \] defined for
probabilities $\nu$ on finite algebras $\CA$.

Let $g:\Sigma\to {\R}$ be defined by $g(\w)=-\log
T_{\w_1}^{\prime}(\Pi(\T(\w)))$.  We then denote the Lyapunov
exponent of a measure $\mu\in\Mn$ by
$$\ln\mu= \int \sum_{k=0}^{n-1}g(\T^k\w)\d\mu(\w).$$

Let $\pi_n:\Sigma\to\CA^{n}$ denote the natural
projection onto the first $n$ symbols, \ie $\pi_n(\w) \mapsto
(\w_1,\dots,\w_n) \in \CA^n$. For cylinder sets we use the notation
$[\alpha_1,\ldots,\alpha_n]$ for
$\pi_n^{-1}(\alpha_1,\ldots,\alpha_n)$ and for $\w\in\Sigma$ we let
$[\w]_n$ denote $\pi_n^{-1}(\pi_n(\w))$. Let $\CF_n$ be the finite 
$\sigma$-algebra generated by the $n$-cylinders $\{[\w]_n: \w\in\Sigma\}$.

For a function $f:\Sigma\to \R$, define the $n$th variation as
\[ \var_n f = \sup_{[\w']_n=[\w]_n} |f(\w)-f(\w')|. \] By definition,
$\displaystyle\lim_{n\rightarrow\infty} \var_n f = 0$ if $f$ is
continuous and then also 
$\displaystyle\lim_{n\rightarrow\infty} \var_n \A_n f = 0$.

Let $I_n(\w)\subset I$ denote the interval $T_{\w_1}\circ\cdots\circ
T_{\w_n}([0,1])$, where $I_0(\w)\equiv [0,1]$, and let the
corresponding diameters be given by $D_n(\w)=\diam(I_n(\w))$. For
$n\geq 1$, we write $\ll_n(\w)$ for $-\frac 1n\,\log \D_n(\w)$.

We start by showing that, for lage $n$,  $\ll_n$ is well approximated
by the Birkhoff average $\A_n g(\w)$.
\begin{lemma}\label{conv}
Let $T_i:[0,1]\to [0,1]$, $1\leq i\leq m$, be $C^1$ maps
such that $T_i^{\prime}(x)>0$ and such that for distinct $i,j$, we have  
$T_i(0,1)\cap T_j(0,1)=\emptyset$. In addition we assume that 
$D_n(\w)\to 0$, uniformly in $\w$. Then
  \[ \lim_{n\rightarrow\infty}\sup_{\omega\in\Sigma}\{|-\frac 1n\, 
\log\D_n(\w) - \A_n g(\w)|\}= 0
  \]
\end{lemma}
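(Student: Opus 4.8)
The plan is to write $\D_n(\w)$ as a product of derivatives via the chain rule and the mean value theorem, and then compare it termwise with $\A_n g(\w)$. Since $T_i'>0$, each $T_i$ is strictly increasing, so $F:=T_{\w_1}\circ\cdots\circ T_{\w_n}$ is increasing, $I_n(\w)=[F(0),F(1)]$ and $\D_n(\w)=F(1)-F(0)=F'(\xi)$ for some $\xi\in(0,1)$ (depending on $\w$ and $n$). By the chain rule $F'(\xi)=\prod_{j=1}^{n}T_{\w_j}'(y_j)$, where $y_j:=T_{\w_{j+1}}\circ\cdots\circ T_{\w_n}(\xi)$ lies in the interval $J_j:=T_{\w_{j+1}}\circ\cdots\circ T_{\w_n}([0,1])=I_{n-j}(\T^j\w)$, whose diameter is $\D_{n-j}(\T^j\w)$. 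Crucially, $\Pi(\T^j\w)$ also lies in $J_j$: indeed $\Pi(\T^j\w)=\lim_{N}T_{\w_{j+1}}\circ\cdots\circ T_{\w_N}(0)$, and for $N\ge n$ each of these points already lies in the closed interval $J_j$. Unwinding the definition $g(\w)=-\log T_{\w_1}'(\Pi(\T\w))$ gives $\A_n g(\w)=-\frac1n\sum_{j=1}^{n}\log T_{\w_j}'(\Pi(\T^j\w))$, so
\[
\Bigl|-\tfrac1n\log\D_n(\w)-\A_n g(\w)\Bigr|\le\frac1n\sum_{j=1}^{n}\bigl|\log T_{\w_j}'(y_j)-\log T_{\w_j}'(\Pi(\T^j\w))\bigr|,
\]
where for each $j$ both arguments lie in $J_j=I_{n-j}(\T^j\w)$.

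Next I would split the sum according to the size of $n-j$. Fix $\e>0$. Each $\log T_i'$ is continuous on the compact interval $[0,1]$, and finite (since $T_i'>0$ and $C^1$ on a compact set forces $T_i'$ bounded and bounded away from $0$), hence uniformly continuous; choose $\delta>0$ so that $|x-y|<\delta$ implies $|\log T_i'(x)-\log T_i'(y)|<\e$ for all of the finitely many $i$. By the hypothesis that $\D_N(\w)\to 0$ uniformly in $\w$, pick $N_0$ with $\D_N(\w)<\delta$ for all $N\ge N_0$ and all $\w$. Using the nesting $I_{n-j}(\T^j\w)\subseteq I_{N_0}(\T^j\w)$ whenever $n-j\ge N_0$, the interval $J_j$ has diameter $<\delta$ for every $j\le n-N_0$, so the corresponding summand is $<\e$. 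For the at most $N_0$ remaining indices $j>n-N_0$ I bound the summand crudely by $2C$, where $C:=\max_i\|\log T_i'\|_\infty<\infty$.

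Combining the two parts,
\[
\sup_{\w\in\Sigma}\Bigl|-\tfrac1n\log\D_n(\w)-\A_n g(\w)\Bigr|\le \e+\frac{2CN_0}{n},
\]
which is $<2\e$ once $n>2CN_0/\e$. Since $\e>0$ was arbitrary, the supremum tends to $0$, which is the claim.

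The only genuinely delicate point I expect is the bookkeeping around the "bad" indices $j$ close to $n$: for those, $\D_{n-j}(\T^j\w)$ need not be small, so uniform continuity gives nothing, and one must instead observe that there are only $\Ordo{1}$ such indices, so that after dividing by $n$ their total contribution is $\ordo{1}$ uniformly in $\w$. Everything else — the reduction of $\D_n$ to a product of derivatives, the identification of $J_j$ with $I_{n-j}(\T^j\w)$, the fact that both $y_j$ and $\Pi(\T^j\w)$ sit inside $J_j$, and the nesting of the intervals $I_\bullet$ — is routine, and the finiteness of the symbol set is what makes the constants $\delta$ and $C$ uniform.
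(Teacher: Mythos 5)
Your proof is correct and follows essentially the same strategy as the paper's: both write $-\log\D_n(\w)$ as a sum $\sum_{j=1}^{n}\bigl(-\log T_{\w_j}'(\text{point in }I_{n-j}(\T^j\w))\bigr)$ via the mean value theorem, compare termwise with $\A_n g(\w)$ using uniform continuity of $\log T_i'$ together with the uniform shrinking of $\D_m$, and then split the sum into the $n-N_0$ ``good'' indices (small diameter, difference $<\e$) plus $O(1)$ ``bad'' indices handled by a crude bound. The only cosmetic difference is that the paper isolates the telescoping increments $g_n(\w)=-\log(D_n(\w)/D_{n-1}(\T\w))$ and proves $g_n\to g$ uniformly before summing, applying the integral mean value theorem once per increment, whereas you apply the MVT once to the full composition $F$ and use the chain rule to distribute a single $\xi$ across the factors; the resulting estimates are identical.
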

\begin{proof}
  We introduce the functions $g_n:\Sigma\to \R$, $n \geq
  1$, defined by
  \[
  g_n(\w)= -\log\frac{D_n(\w)}{D_{n-1}(\T\w)}.
  \]
  It is immediate from the definitions that
  \[
  -\log D_n(\w) = \sum_{i=0}^{n-1} g_{n-i}(\T^i\w).
  \]
  We can relate this identity to the Birkhoff averages of
  $g:\Sigma\to\R$ using the following fact
  \begin{equation}\label{uniform}
    g_n(\w) \to g(\w)\quad\text{uniformly in $\w$ as $n\to\infty$}. 
  \end{equation}
  To see (\ref{uniform}), we note that for $n \geq 2$,
  \[
  g_n(\w)=-\log \left(\frac{1}{D_{n-1}(\T\w)}\int_{I_{n-1}(\T\w)}
    T_{\w_1}'(x) \; dx \right) = -\log T_{\w_1}'(\xi)
  \]
  for some $\xi\in I_{n-1}(\T\w)$ by the intermediate value
  theorem. By hypothesis, each $\log T_i'(x)$ is (uniformly)
  continuous and thus, since $\diam(I_{n-1}(\T\w))=D_{n-1}(\T\w)$
  tends to $0$ uniformly, it follows that
  \[ 
    g_n(\w)-g(\w)= \log T_{\w_1}'(\Pi(\T\w))-\log T_{\w_1}'(\xi) 
  \]
  also tends to $0$ uniformly as $n\to\infty$.

   Let
  $\e>0$ and note that by \qr{uniform}, we can choose $N_1$ such 
that for $n\geq N_1$ and $\w\in\Sigma$ we have $|g_n(\w)-g(\w)|
\leq\frac{\e}{2}$. We can also find a $C_1>0$ where $|g_n(\w)-g(\w)|
  < C_1$ for all $\omega\in\Sigma$.  Let $N=\left(\left[\frac{2C_1}{\e} 
\right] + 1\right)N_1$.

  For $n\geq N$ and $\w\in\Sigma$ we have that
  \begin{align*}
    -\log\D_n(\w)&=\sum_{i=0}^{n-1} g_{n-i}(\T^i\w)\\
    &\leq \sum_{i=0}^{n-1-N_1}\left(g(\T^{i}\w\right)+
           \frac{\e}{2})+\sum_{i=n-N_1}^{n-1}g_{n-i}(\T^i\w)\\
    &\leq \sum_{i=0}^{n-1}g(\T^{i}\w)+(n-N_1)\frac{\epsilon}{2}+C_1N_1\\
    &\leq \sum_{i=0}^{n-1}g(\T^{i}\w)+(n-N)\frac{\epsilon}{2} + N \frac{\e}{2}\text{ (since $C_1N_1\leq \frac{N\epsilon}{2}$)}\\
    &\leq \sum_{i=0}^{n-1}\left(g(\T^i(\w))+\e\right).
  \end{align*}
  The other inequality is similar.
\end{proof}

We now need to relate $\T^n$-ergodic measures to $\T$-ergodic
measures and $\T$-invariant measures to $\T^n$-ergodic measures.
Given $\nu\in \En$ we define $\mu = \A_n^*\nu$ as the measure
\[ \mu = \frac 1n \, \sum_{k=0}^{n-1} \nu\circ \T^{-k}. \]
\begin{lemma}\label{decyclo}
   If $\mu=\A_n^*\nu$, $\nu\in\En$, then $\mu\in\Eone$ and 
   \begin{enumerate}
   \item $\hone{\mu}=\frac{1}{n} \hn\nu.$
   \item $\lone\mu =\frac{1}{n}\ln\nu$.
   \item $\int f\d\mu=\int \A_nf \d\nu$.
   \end{enumerate}
 \end{lemma}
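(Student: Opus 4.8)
The plan is to check, in order, that $\mu=\A_n^*\nu$ is $\T$-invariant, that it is $\T$-ergodic, and then the three identities; items $(2)$ and $(3)$ will be routine changes of variable, and $(1)$ is where the actual work lies.

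Invariance of $\mu$ is a one-line computation: since $\nu$ is $\T^n$-invariant we have $\nu\circ\T^{-n}=\nu$, so $\mu\circ\T^{-1}=\frac1n\sum_{k=1}^{n}\nu\circ\T^{-k}$ equals $\frac1n\sum_{k=0}^{n-1}\nu\circ\T^{-k}=\mu$; hence $\mu\in\Mone$, and in particular $\mu\in\Mn$. For ergodicity I would work with strictly invariant sets: if $\T^{-1}B=B$ then $\T^{-k}B=B$ for every $k\ge 0$, so $\T^{-n}B=B$, and $\T^n$-ergodicity of $\nu$ forces $\nu(B)\in\{0,1\}$; then $\mu(B)=\frac1n\sum_{k=0}^{n-1}\nu(\T^{-k}B)=\nu(B)\in\{0,1\}$, so $\mu\in\Eone$. (This is the standard fact that averaging a $\T^n$-ergodic measure over a $\T$-orbit segment yields a $\T$-ergodic measure.)

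Identities $(2)$ and $(3)$ follow by applying the change-of-variables formula $\int\phi\,\d(\nu\circ\T^{-k})=\int(\phi\circ\T^{k})\,\d\nu$ to each summand in the definition of $\mu$ and averaging over $k=0,\dots,n-1$; this is legitimate because $f$ and $g$ are continuous on the compact space $\Sigma$ and hence bounded. For $(3)$ it rewrites $\int f\,\d\mu$ as $\frac1n\sum_{k=0}^{n-1}\int(f\circ\T^{k})\,\d\nu=\int\A_nf\,\d\nu$, and for $(2)$, recalling $\lone\mu=\int g\,\d\mu$ and $\ln\nu=\int\sum_{k=0}^{n-1}(g\circ\T^{k})\,\d\nu$, the same manipulation gives $\lone\mu=\frac1n\ln\nu$.

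The only step needing an argument is $(1)$, and I expect the mild obstacle there to be that $\T$ is not invertible. Since $\mu\in\Mn$ and $\CF_n$ generates under $\T^n$, we already have $\hn\mu=n\hone\mu$, so it suffices to prove $\hn\mu=\hn\nu$. Put $\nu_k=\nu\circ\T^{-k}$; each $\nu_k$ is $\T^n$-invariant, and by affinity of the entropy functional on $\Mn$ we get $\hn\mu=\frac1n\sum_{k=0}^{n-1}\hn{\nu_k}$, so it is enough to show $\hn{\nu_k}=\hn\nu$ for each $k$. Because $\T$ is non-invertible, $\T^{k}$ is not a conjugacy of $(\Sigma,\T^n)$ with itself; instead I would note that $\nu_k=(\T^{k})_*\nu$ and $\nu=(\T^{n-k})_*\nu_k$ (both hold because powers of the shift commute and $(\T^n)_*\nu=\nu$), so each of $(\Sigma,\T^n,\nu)$ and $(\Sigma,\T^n,\nu_k)$ is a measure-theoretic factor of the other, whence they have equal $\T^n$-entropy. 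Combining, $\hn\mu=\hn\nu$ and therefore $\hone\mu=\frac1n\hn\nu$, which is $(1)$. As an alternative to invoking affinity, one may instead compare $H(\mu\vert_{\CF_N})$ with $\frac1n\sum_k H(\nu_k\vert_{\CF_N})$ using concavity of $H$ together with the reverse bound $H(\mu\vert_{\CF_N})\le\frac1n\sum_k H(\nu_k\vert_{\CF_N})+\log n$, and use that $|H(\nu_k\vert_{\CF_N})-H(\nu\vert_{\CF_N})|\le k\log m$ uniformly in $N$; dividing by $N$ and letting $N\to\infty$ gives the same identity.
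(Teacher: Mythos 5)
Your proof is correct, and it is considerably more explicit than the paper's, which simply cites Abramov's Theorem (Walters, Theorem 4.13) for the entropy statement and calls (2) and (3) routine. The interesting point is that Walters 4.13 only gives $\hn\mu = n\,\hone\mu$ for a $\T$-\emph{invariant} measure $\mu$; the genuinely nontrivial step is $\hn\mu = \hn\nu$, which relates the $\T^n$-entropy of a $\T^n$-invariant (but not $\T$-invariant) $\nu$ to that of its orbit average. The paper glosses over this, whereas you supply a clean argument: affinity of $\mu\mapsto\hn\mu$ on $\Mn$ reduces it to $\hn{\nu_k}=\hn\nu$ for $\nu_k=\nu\circ\T^{-k}$, and the two-sided factor relation via $\T^k$ and $\T^{n-k}$ (using $(\T^n)_*\nu=\nu$) forces equality of entropies without needing invertibility of $\T$. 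Your alternative via the uniform bound $|H(\nu_k\vert_{\CF_N})-H(\nu\vert_{\CF_N})|\le k\log m$ plus the $\log n$ defect in concavity of $H$ is also valid. The verification of ergodicity of $\mu$ (strictly $\T$-invariant $B$ gives $\T^{-n}B=B$, hence $\nu(B)\in\{0,1\}$, hence $\mu(B)=\nu(B)$) is correct and is the standard argument. Items (2) and (3) are indeed the obvious changes of variable; note that with the paper's convention $\ln\nu=\int\sum_{k=0}^{n-1}g\circ\T^k\,\d\nu = n\int\A_n g\,\d\nu$, your computation gives exactly $\lone\mu=\frac1n\ln\nu$. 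In short: same destination as the paper, but you actually build the bridge that the paper only points at.
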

 \begin{proof}
   The first part of this lemma follows by Abramov's
   Theorem (see \cite{W} Theorem 4.13). The final two parts are routine calculations.
 \end{proof}

 \newcommand{\otim}[1][]{^{\otimes}}

 The next lemma states that we may approximate any invariant measure
 $\mu \in \Mone$ by ergodic measures in $\En$.  A probability
 measure $\mu$ on $\Sigma$ is \emph{$n$th level Bernoulli} if the
 \emph{$n$-blocks} $\pi_n\circ T^{kn}(\w)$ are independent and
 identically distributed for $k=0,1,\dots$. An $n$th level Bernoulli
 measure is always $\T^n$-invariant and ergodic with respect to
 $\T^n$.  Moreover, we have a natural continuous bijection
 $\nu\mapsto\nu\otim$ between probabilities on blocks
 $\nu\in\CM(\CA^n)$ and the corresponding $n$th level Bernoulli
 measures. The block probability $\nu$ is the marginal of the
 corresponding $n$th level Bernoulli measure, \ie
 $(\nu\otim)\circ\pi_n^{-1} = \nu$, and we have
 $\hn {\nu\otim} = H(\nu)$.

 \begin{lemma}\label{approxim}
   For any $\mu\in\Mone$, 
   we can find a sequence of
   measures $\{\mu_n\}$ converging to $\mu$ in the weak$^{\star}$-topology
   such that
   \begin{enumerate}
   \item\label{xx1} $\mu_n$ is $n$th level Bernoulli,
   \item\label{xx3} $\lim_{n\to\infty} \frac1n {\hn{\mu_n}} =
     \hone\mu$ and
     $\lim_{n\to\infty}\frac1n \ln{\mu_n}=\lone\mu$,
   \end{enumerate}
   and moreover, if $\int f\d\mu=\alpha\in
   (\alpha_{\min}, \alpha_{\max})$, 
   then we may in addition assume that
   \begin{enumerate}
   \item[(3)]\label{xx4} $\int \A_nf\d\mu_n= \alpha$.
   \end{enumerate}
 \end{lemma}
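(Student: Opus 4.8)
The plan is to take the canonical Bernoulli approximations of $\mu$ and perturb them slightly so as to meet the constraint (3). Put $\nu_n^{0}:=\mu\circ\pi_n^{-1}\in\CM(\CA^n)$; for the unconstrained statements (1)--(2) one may simply take $\mu_n:=(\nu_n^{0})^{\otimes}$. For the constrained statement, fix $\mu^{-},\mu^{+}\in\Mone$ with $\alpha^{-}:=\int f\,\d\mu^{-}<\alpha<\int f\,\d\mu^{+}=:\alpha^{+}$ (possible since $\alpha\in(\alpha_{\min},\alpha_{\max})$), put $\nu_n^{\pm}:=\mu^{\pm}\circ\pi_n^{-1}$, and for $s,t\geq 0$ with $s+t\leq 1$ set
\[
\zeta_n(s,t):=(1-s-t)\,\nu_n^{0}+s\,\nu_n^{-}+t\,\nu_n^{+}\in\CM(\CA^n),\qquad \mu_n(s,t):=\zeta_n(s,t)^{\otimes}.
\]
Each $\mu_n(s,t)$ is $n$th level Bernoulli, so (1) holds automatically; the perturbation parameters $s_n,t_n\geq 0$ will be chosen with $s_n+t_n\to 0$, and we set $\mu_n:=\mu_n(s_n,t_n)$.

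Assuming $s_n+t_n\to 0$, properties (2) and weak$^{\star}$-convergence follow from three routine facts. (i) For $n\geq k$ we have $\mu_n([\w]_k)=(1-s_n-t_n)\mu([\w]_k)+s_n\mu^{-}([\w]_k)+t_n\mu^{+}([\w]_k)\to\mu([\w]_k)$, and convergence on all cylinders implies $\mu_n\to\mu$ in the weak$^{\star}$-topology. (ii) $\hx{n}{\mu_n}=H(\zeta_n(s_n,t_n))$, so the elementary estimates $\sum_i p_iH(\rho_i)\leq H\!\big(\sum_i p_i\rho_i\big)\leq\sum_i p_iH(\rho_i)+\log 3$ for a convex combination of three probabilities, together with $H(\nu_n^{\pm})\leq n\log m$ and the defining identity $\tfrac1n H(\nu_n^{0})=\tfrac1n H(\mu\vert_{\CF_n})\to\hone\mu$, give $\tfrac1n\hx{n}{\mu_n}\to\hone\mu$. (iii) $\tfrac1n\ln{\mu_n}=\int\A_n g\,\d\mu_n$; by \lemref{conv}, $\A_n g$ is uniformly close to $\ll_n=-\tfrac1n\log D_n$, which depends only on the first $n$ coordinates, so $\int\ll_n\,\d\mu_n=\int\ll_n\,\d\zeta_n(s_n,t_n)=(1-s_n-t_n)\int\ll_n\,\d\mu+s_n\int\ll_n\,\d\mu^{-}+t_n\int\ll_n\,\d\mu^{+}$; since $\|\ll_n\|_{\infty}$ stays bounded (again \lemref{conv}, because $\|\A_n g\|_{\infty}\leq\|g\|_{\infty}$) while $\int\ll_n\,\d\mu\to\int g\,\d\mu=\lone\mu$ by $\T$-invariance of $\mu$, the right-hand side tends to $\lone\mu$, whence $\tfrac1n\ln{\mu_n}\to\lone\mu$.

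For (3) it remains to choose $s_n,t_n\to 0$ with $\Psi_n(s_n,t_n)=\alpha$, where $\Psi_n(s,t):=\int\A_n f\,\d\mu_n(s,t)$. The main point I would establish is that $\Psi_n$ converges \emph{uniformly} on the triangle $\Delta:=\{s,t\geq 0,\ s+t\leq 1\}$ to the \emph{affine} function $\Psi_\infty(s,t):=(1-s-t)\alpha+s\alpha^{-}+t\alpha^{+}$. Writing $\mu_{s,t}:=(1-s-t)\mu+s\mu^{-}+t\mu^{+}\in\Mone$, one checks, using $\T$-invariance of $\mu,\mu^{\pm}$ and the block structure of $\zeta_n(s,t)^{\otimes}$, that for every $i$ with $0\leq i\leq n-N$ the measures $\mu_n(s,t)\circ\T^{-i}$ and $\mu_{s,t}$ agree on $\CF_N$, hence $\big|\int f(\T^i\w)\,\d\mu_n(s,t)-\int f\,\d\mu_{s,t}\big|\leq 2\var_N f$; bounding the remaining $N$ terms of the Birkhoff average by $2\|f\|_{\infty}$ gives $\big|\Psi_n(s,t)-\Psi_\infty(s,t)\big|\leq 2\var_N f+\tfrac{2N\|f\|_{\infty}}{n}$ for all $(s,t)\in\Delta$, and letting $n\to\infty$ then $N\to\infty$ proves the claim (note $\int f\,\d\mu_{s,t}=\Psi_\infty(s,t)$).

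To finish: $\Psi_\infty(0,0)=\alpha$, and on the segment $S_\eta:=\{s+t=\eta,\ s,t\geq 0\}$ one has $\Psi_\infty(\eta,0)=\alpha-\eta(\alpha-\alpha^{-})\leq\alpha-\eta c$ and $\Psi_\infty(0,\eta)\geq\alpha+\eta c$, where $c:=\min(\alpha-\alpha^{-},\alpha^{+}-\alpha)>0$. Fixing $\eta>0$, uniform convergence yields $\Psi_n(\eta,0)<\alpha<\Psi_n(0,\eta)$ for all large $n$, so the intermediate value theorem applied to the continuous function $\Psi_n$ restricted to $S_\eta$ produces $(s_n,t_n)\in S_\eta$ with $\Psi_n(s_n,t_n)=\alpha$ and $s_n+t_n=\eta$; letting $\eta=\eta_j\downarrow 0$ and using this successively on ranges of $n$ gives $(s_n,t_n)$ with $s_n+t_n\to 0$ and $\Psi_n(s_n,t_n)=\alpha$ for all large $n$ (finitely many initial indices being irrelevant), which is (3). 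The delicate point throughout is that $\nu\mapsto\nu^{\otimes}$ is not affine, so $\Psi_n$ is not affine and one cannot hit $\alpha$ exactly by solving a linear equation for the mixing weights; the device above circumvents this because the \emph{limit} $\Psi_\infty$ is affine with fixed nonzero slope, so a value of $\Psi_n$ slightly off $\alpha$ can be corrected by a perturbation of controlled, vanishing size, rigorously via uniform convergence plus a one-dimensional intermediate value argument on the shrinking segments $S_\eta$.
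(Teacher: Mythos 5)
Your proof is correct, and it uses the same overall template as the paper (pass to the canonical $n$th level Bernoulli approximations $(\mu\circ\pi_n^{-1})^{\otimes}$, then perturb the block marginal by a convex combination so that the Birkhoff-average constraint can be hit by the intermediate value theorem), but the mechanics of the perturbation are genuinely different. The paper perturbs on one side only: using the dichotomy ``WLOG $\int \A_{n}f\,\d\mu_{n}\le\alpha$ along a subsequence,'' it mixes $\nu_n=\mu\circ\pi_n^{-1}$ with the Dirac mass $\delta_n$ on the initial word of a Birkhoff-typical point $x$ with $\A_nf(x)>\alpha+2\rho$ eventually, sets $\xi_{s,n}=(s\nu_n+(1-s)\delta_n)^{\otimes}$, finds $s_n$ by the intermediate value theorem, and then proves $s_n\to 1$ by comparing the (non-affine) $\xi_{s,n}$ with the affine mixture $\zeta_{s,n}=s\nu_n^{\otimes}+(1-s)\delta_n^{\otimes}$, which shares the same $\CF_n$-marginal, at a cost of $\var_n\A_nf$. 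You instead use a two-sided perturbation by a pair of auxiliary invariant measures $\mu^{\pm}$ and establish \emph{uniform} convergence of $\Psi_n$ to the affine limit $\Psi_\infty(s,t)=\int f\,\d\mu_{s,t}$, which lets you solve $\Psi_n=\alpha$ on the shrinking segments $S_\eta$ and simultaneously force $s_n+t_n\to 0$ with a single diagonal argument. Your route avoids the case split and makes the error control more transparent, at the cost of being slightly longer and invoking two auxiliary measures where the paper uses one Dirac; the key computation that $\mu_n(s,t)\circ\T^{-i}$ agrees with $\mu_{s,t}$ on $\CF_N$ for $0\le i\le n-N$ is exactly the right observation and plays the role that the $\CF_n$-marginal comparison $\xi_{s,n}\circ\pi_n^{-1}=\zeta_{s,n}\circ\pi_n^{-1}$ plays in the paper's version. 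The verifications of (1), (2), and weak$^\star$-convergence are also sound; in particular your use of $H(\sum p_i\rho_i)\le\sum p_iH(\rho_i)+\log 3$ and the $\CF_n$-measurability of $\ll_n$ to reduce the Lyapunov computation to an affine one are correct.
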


 \begin{proof}[Proof of Lemma \ref{approxim}]
   To see the first part, let $\mu_n =
   (\mu\circ\pi_n^{-1})\otim$. Then $\mu_n\vert_{\CF_n} =
   \mu\vert_{\CF_n}$ and, since $\CF_n$ increases to the Borel
   $\sigma$-algebra, this implies that $\mu_n\to\mu$ in the 
weak$^{\star}$-topology. Also,
   \[
   \lone\mu = \lim_{n\to\infty} \int \A_n g \d\mu_n \text{ and }
   \alpha=\int f\d\mu = \lim_{n\to\infty} \int \A_n f \d\mu_n
   \]
   and by definition we have
   \[ \hone \mu =\lim_{n\to\infty} \frac{1}{n}
   H(\mu\circ\pi^{-1}_n) = \lim_{n \to +\infty } \frac{1}{n}
   \hn {\mu_n}. \]
   
   We need to work a little bit more to modify this construction to
   give a sequence $\tilde\mu_n$ of $n$th level Bernoulli measures
   that also satisfies \eqref{xx4}, in addition to \eqref{xx1} and
   \eqref{xx3}.  Without loss of generality, we may assume that
   $\int \A_{n_j}f\d\mu_{n_j} \leq \alpha$ for some infinite
   sequence $\mathcal N = \{n_j\}$. We proceed to construct
   $\tilde\mu_n$ for such $n$'s and a symmetric construction gives
   $\tilde\mu_n$ for $n\in\N \setminus \mathcal N$.

   By the ergodic theorem, we can always find a point $x\in\Sigma$,
   a number $0<\rho<(\alpha_{max}-\alpha)/3$ and an integer $N>0$
   such that $\A_nf(x)\geq \alpha + 3\rho$ for all $n \geq N$.
   Denote by $\nu_n = \mu_n\circ\pi_n^{-1} \in \CM(\CA^n)$ the block
   marginal of $\mu_n$ and let $\delta_{n} \in\CM(\CA^n)$ denote the
   Dirac measure at the word $(x_1,x_2,\dots,x_n)$.  Then
   $\delta_n\otim$ is a Dirac measure on the corresponding periodic
   sequence, and we can also assume that $\int \A_n f
   \d(\delta_n\otim) > \alpha + 2\rho$ for all $n\geq N$.

   Furthermore, define for $ s \in [0,1]$ a $n$th level
   Bernoulli measure
   \[ \xi_{s,n}:=(s \nu_n +
   (1-s)\delta_n)\otim. \] Note that the map \( s\mapsto
   F(s)=\int \A_n f \d\xi_{s,n} \) is continuous and
   hence, since $n\in \mathcal N$ means that $F(1)\leq \alpha$ and
   $F(0)>\alpha+2\rho$, we deduce that \( \int \A_n f
   \d\xi_{s_n,n} = \alpha, \) for some $s_n\in[0,1]$.

   We need to show that we can choose $s_n\to 1$ as $n$ tends
   to infinity with $n\in\mathcal N$, since then by setting
   $\tilde\mu_n = \xi_{s,n}$ we have
   \[ \frac{1}{n} \hn {\tilde\mu_n} = s_n \frac{1}{n}
   \hn{\mu_n} + (1-s_n) \cdot 0 + \Ordo{1/n} \to
   \hone{\mu} \] and
   \[
     \lone{\tilde\mu_n} = s_n \cdot \lone{\mu_n} +
     (1-s_n) \cdot \int \A_n g\,\d(\delta_n\otim) + 
     \Ordo{\var_n \A_n g} \to \lone{\mu} 
   \]
   as required. The fact that $\lim_{n\rightarrow\infty}\var_n A_ng=0$ follows from the uniform continuity of $g$.

   For all $s \in [0,1]$, the $n$-block marginals for
   $\xi_{s,n}$ and
   \[
   \zeta_{s,n}: =s \nu_n\otim + (1-s)\delta_n\otim
   \]
   coincide, \ie $\xi_{s,n} \circ \pi_n^{-1}=\zeta_{s,n}
   \circ \pi_n^{-1}$. Therefore
   \[
   \int \A_n f \d\xi_{s,n} \geq \int\A_n f
   \d\zeta_{s,n}-\var_n\A_n f.
   \]
   and since $\nu_n\otim = \mu_n$ we deduce
   \begin{align*}
     \int\A_n f \d\zeta_{s,n} &= s\int\A_n f \d\mu_n +
     (1-s) \,  \int \A_n f \d(\delta_n\otim) \\
     &\geq \alpha - s\epsilon_{n} + (1-s)\cdot (2\rho),
   \end{align*}
   where $\epsilon_n = \alpha - \int \A_n f \d\mu_n$.
   
   Thus
   \begin{align*}
     \alpha &= \int\A_n f \d{\tilde\mu_n} \\
     &\geq \alpha - s\epsilon_{n} + (1-s)\cdot (2\rho) -
     \var_n\A_n f,
   \end{align*}
   which, since $\e_n\to 0$ and $\var_n\A_n f\to0$ as $n\in\mathcal
   N$ tends to infinity, gives that $\lim_{n \to +\infty} s_{n} =
   1$.
 \end{proof}

\section{Lower Bound}\label{lowerbound}
In this section we shall prove the lower bound in
Theorem~\ref{main}, \ie 
\begin{equation}\label{lbnd}
  \dim\Pi(X_{\alpha} \cap \tilde\Sigma ) \geq
  \sup_{\mu\in\Mone} \left\{
   \frac{\hone\mu}{\lone\mu}:
   \int f\d\mu=\alpha,\lone\mu>0 
  \right\}.
\end{equation}
We start by calculating the dimension of the projection of any
invariant measure $\mu\in\En$ with positive Lypaunov exponent.
\begin{lemma}[Hofbauer--Raith]\label{hr}
  Let $\mu\in\Mn$ be ergodic with respect to $\T^n$ and satisfy
  $\ln\mu>0$. We have that
\[ \dim(\mu\circ\Pi^{-1}) =\frac{\hn{\mu}}{\ln\mu}. \]
\end{lemma}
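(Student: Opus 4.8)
The plan is to show that $\mu\circ\Pi^{-1}$ is exact dimensional with the stated dimension by controlling the local scaling of the measure along cylinders. Fix $\nu := \mu\circ\Pi^{-1}$. The key observation is that, because the images $T_i(0,1)$ are pairwise disjoint, the projection $\Pi$ is injective on a full-measure set and the image $I_n(\w) = \Pi([\w]_n)$ of an $n$-cylinder is an interval of diameter $D_n(\w)$; moreover distinct $n$-cylinders have disjoint interiors in $[0,1]$. Hence $\nu(I_n(\w)) = \mu([\w]_n)$ for $\mu$-a.e.\ $\w$, and up to a bounded overlap the intervals $\{I_n(\w)\}$ behave like a genuine partition of $\Lambda$ at scale $D_n(\w)$. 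So the problem reduces to comparing $-\log\mu([\w]_n)$ with $-\log D_n(\w) = n\ll_n(\w)$.

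Next I would invoke the ergodic theorems. Working with $\T^n$ and $\mu\in\En$: by the Shannon--McMillan--Breiman theorem applied to the generating partition $\CF_n$ (for the map $\T^n$, whose generating partition is the $n$-cylinder algebra), one has $-\frac{1}{k}\log\mu([\w]_{nk}) \to \hn\mu$ for $\mu$-a.e.\ $\w$. By the Birkhoff ergodic theorem applied to $g_1 + \dots + g_n$ summed along the $\T^n$-orbit — equivalently to the function $\sum_{j=0}^{n-1} g\circ\T^j$ and Lemma~\ref{conv} giving $-\log D_{nk}(\w) = \sum_{i=0}^{nk-1} g_{nk-i}(\T^i\w)$ with $g_m \to g$ uniformly — one gets $-\frac{1}{k}\log D_{nk}(\w) \to \ln\mu$ for $\mu$-a.e.\ $\w$; here the hypothesis $\ln\mu > 0$ guarantees $D_{nk}(\w)$ shrinks at a definite exponential rate so that cylinder scales are comparable to metric scales. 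Dividing the two limits, for $\mu$-a.e.\ $\w$,
\[
  \lim_{k\to\infty} \frac{\log\mu([\w]_{nk})}{\log D_{nk}(\w)}
  = \frac{\hn\mu}{\ln\mu}.
\]

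The remaining work is to pass from this limit along the subsequence of scales $\{D_{nk}(\w)\}_{k\ge 1}$ to the genuine local dimension $\lim_{r\to 0}\frac{\log\nu(B(\Pi\w,r))}{\log r}$. For an upper bound on local dimension: given small $r$, pick $k$ with $D_{n(k+1)}(\w) \le r < D_{nk}(\w)$; since $I_{nk}(\w)\supseteq B(\Pi\w, \text{something})$ is not automatic, one instead uses that the ball $B(\Pi\w, r)$ contains $I_{n(k+1)}(\w)$ once $r$ dominates the diameter, giving $\nu(B(\Pi\w,r)) \ge \mu([\w]_{n(k+1)})$, and combined with the ratio $D_{nk}/D_{n(k+1)}$ staying subexponential (again from $g_m\to g$ uniformly and boundedness of $\log T_i'$) this yields $\overline{\dim}_{\mathrm{loc}}\nu(\Pi\w) \le \hn\mu/\ln\mu$. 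For the lower bound on local dimension one must bound $\nu(B(\Pi\w,r))$ from above: the ball meets only boundedly many cylinders $I_{nk}(\w')$ of a comparable generation — here disjointness of the $T_i(0,1)$ and the bounded distortion from $C^1$-ness with the $g_m\to g$ control are what prevent an uncontrolled pile-up — so $\nu(B(\Pi\w,r))$ is at most a constant times $\max$ of finitely many $\mu([\w']_{nk})$, each of which is $\le D_{nk}(\w')^{\,\hn\mu/\ln\mu - o(1)} \le r^{\,\hn\mu/\ln\mu - o(1)}$. This gives $\underline{\dim}_{\mathrm{loc}}\nu(\Pi\w) \ge \hn\mu/\ln\mu$ a.e., and then $\dim\nu = \hn\mu/\ln\mu$ by the standard fact that a.e.\ pointwise dimension equal to $d$ forces Hausdorff dimension $d$.

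I expect the main obstacle to be precisely this last transfer from cylinder scales to metric balls, i.e.\ the geometric bounded-overlap estimate showing a ball of radius $r$ cannot intersect too many cylinders of the relevant generation. In the uniformly contracting case this is classical, but here, since $\ln\mu>0$ is only an average statement, one has to localize: along a $\mu$-typical $\w$ the diameters $D_k(\w)$ do decay exponentially and the consecutive ratios $D_k/D_{k+1}$ are subexponential, so the comparison works $\mu$-a.e., which is all that is needed for the Hausdorff dimension of $\nu$. The disjointness hypothesis $T_i(0,1)\cap T_j(0,1)=\emptyset$ does the heavy lifting in ruling out overlaps; if one wanted more than an a.e.\ statement some uniform distortion control would be needed, but for Lemma~\ref{hr} the ergodic-theoretic a.e.\ argument suffices.
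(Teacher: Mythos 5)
Your proposal is correct and takes essentially the same approach as the paper, whose own proof simply cites Hofbauer--Raith and remarks that the result follows from Lemma~\ref{conv}, the Birkhoff Ergodic Theorem, and the Shannon--McMillan--Breiman Theorem. You expand exactly that outline and correctly identify the passage from cylinder scales to metric balls as the delicate step that the citation is carrying.
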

\begin{proof}
  This was originally shown by Hofbauer and Raith in \cite{HR}. The
  proof can be seen by applying Lemma \ref{conv}, together with the
  Birkhoff Ergodic Theorem and the Shannon--McMillan--Breiman
  Theorem.
\end{proof}

Let $\mu\in\En$ satisfy both $\int \A_n f\d\mu=\alpha$ and
$\ln\mu>0$.  
It follows from the Birkhoff Ergodic
Theorem that for any such $\mu$ we have
$\mu(X_\alpha\cap\tilde{\Sigma})=1$.  Hence we can deduce from 
Lemma \ref{hr} that
$$\dim\Pi (X_{\alpha}\cap\tilde{\Sigma})\geq\frac{\hn\mu}{\ln\mu}$$
and so 
\[ 
\dim\Pi (X_{\alpha}\cap\tilde{\Sigma})\geq 
  \sup_{\mu\in\En}
  \left\{
   \frac{\hn\mu}{\ln\mu}: 
   \int \A_n f\d\mu=\alpha\text{ and }\ln\mu>0 
  \right\}. 
\]
We
can now apply Lemma \ref{approxim} to see that
\[
\dim\Pi (X_{\alpha}\cap\tilde \Sigma)\geq
\sup_{\mu\in\Mone}\left\{\frac{\hone\mu}{\lone\mu}:\int
  f\d\mu=\alpha\text{ and }\lone\mu>0\right\}
\]
for $\alpha\in (\alpha_{\min},\alpha_{\max}).$

The cases when $\alpha=\alpha_{\min}$ or $\alpha=\alpha_{\max}$ need
to be handled separately since we cannot apply Lemma
\ref{approxim}. However it can be seen from the ergodic
decomposition of such an invariant measure that
\begin{displaymath}
  \sup\left\{\frac{\hone\mu}{\lone\mu}: 
      \mu\in\Mone,\ \int f\d\mu=\alpha_{\min}\right\}
\end{displaymath}
must be the same as
\begin{displaymath}
  \sup\left\{\frac{\hone\mu}{\lone\mu}:\mu\in\Eone, \int 
f\d\mu=\alpha_{\min}\right\}.
\end{displaymath}
If $\mu'\in\Eone$ occurs in an ergodic
decomposition of $\mu\in\Mone$ where $\int f\d\mu = \alpha_{min}$,
then, by the extremality of $\alpha_{min}$,  $\int f\d\mu' =
\alpha_{min}$. 
The same is true for $\alpha_{\max}$. This completes the proof of
the lower bound.

\section{Upper Bound}

In this section we shall prove the upper bound in
Theorem~\ref{main}, \ie 
\begin{equation}\label{ubnd}
  \dim \Pi (X_{\alpha} \cap \tilde\Sigma) \leq
  \sup_{\mu\in\Mone} \left\{
   \frac{\hone\mu}{\lone\mu}:
   \int f\d\mu=\alpha,\lone\mu>0 
  \right\}.
\end{equation}

For $\delta > 0$ let
\[
\tilde{\Sigma}(\delta) =
\{\omega\in\Sigma:\liminf_{n\rightarrow\infty}A_ng(\omega)\geq \delta\}.
\]
so that $\tilde{\Sigma}$ can be written as the countable union
$\tilde{\Sigma}=\cup_j \tilde{\Sigma}(\delta_j)$, for any
sequence $\{\delta_j\}$ with $\displaystyle\lim_{j\rightarrow\infty}\delta_j = 0$. 

Recall that
\[
\D_n(\w)= \diam\left(T_{\w_1}\circ\cdots\circ T_{\w_n}([0,1])\right)
\text{ and } \ll_n(\w)= -\frac 1n \, \log \D_n(\w). 
\]
An important
consequence of Lemma~\ref{conv} is that for any $\eta>0$
there exists some $N_0=N_0(\eta)$ such that
\begin{equation}\label{approx}
  \ll_n(\w)  \geq (1-\eta) \A_n g(\w)
\end{equation}
for all $n\geq N_0$ and all $\w\in \tilde{\Sigma}(\delta)$. (To see
this, take $\epsilon = \eta\delta$ in the proof of Lemma~\ref{conv}.)

Most of the section will be devoted to proving the following lemma.
We consider the sets
\begin{equation*}
  X(\alpha,N,\rho,\delta) =
  \left\{\w\in\tilde{\Sigma}(\delta): 
  \A_nf(\w)\in B(\alpha,\rho)\text{ for all }
  n\geq N\right\},
\end{equation*}
where, for $\rho>0$, $B(\alpha,\rho)=\{x:|x-\alpha|<\rho\}$.
\begin{lemma}\label{notso}
  For all $\rho,\epsilon>0$, $\delta>0$ sufficiently small and $N\in\N$ 
we can find a measure
  $\mu\in\Mone$ such that $\int f\d\mu\in B(\alpha,2\rho)$,
  $\lone\mu>\delta$ and
  \[ \dim \Pi (X(\alpha,N,\rho,\delta)) \leq
  \frac{\hone\mu}{\lone\mu}+\epsilon. \]
\end{lemma}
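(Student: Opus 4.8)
The plan is to estimate $\dim\Pi(X(\alpha,N,\rho,\delta))$ from above by covering it, at each large scale $n$, with the cylinder intervals $I_n(\omega)=T_{\omega_1}\circ\cdots\circ T_{\omega_n}([0,1])$, and then reading off an invariant measure from the most efficient weighting of such a cover. Write $\mathcal W_n$ for the set of length-$n$ words $w$ with $[w]_n\cap X(\alpha,N,\rho,\delta)\neq\emptyset$, and for each $w\in\mathcal W_n$ fix a representative $\omega^w\in[w]_n\cap X(\alpha,N,\rho,\delta)$. Since $\D_n(\omega)$ depends only on the first $n$ symbols and, by hypothesis, tends to $0$ uniformly, $\{I_n(\omega^w):w\in\mathcal W_n\}$ is a cover of $\Pi(X(\alpha,N,\rho,\delta))$ by sets of diameter $\D_n(\omega^w)=e^{-n\ll_n(\omega^w)}\to0$, so it suffices to control $Z_n(s):=\sum_{w\in\mathcal W_n}\D_n(\omega^w)^s$ for $s$ slightly above the claimed bound. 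One preliminary reduction is needed so that $\qr{approx}$ bites uniformly over the representatives: first intersect with one of the sets $E_{N'}=\{\omega:\A_mg(\omega)\ge\delta\text{ for all }m\ge N'\}$, which increase to a set containing $\tilde\Sigma(\delta)$, so that by countable stability of Hausdorff dimension it is enough to bound $\dim\Pi(X(\alpha,N,\rho,\delta)\cap E_{N'})$ for each $N'$; on such a subset the representatives satisfy $\A_ng(\omega^w)\ge\delta$ whenever $n\ge N'$.

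The heart of the matter is turning the optimal weighting of the cover into an invariant measure. Put $p_{n,w}=\D_n(\omega^w)^s/Z_n(s)$, let $\nu_n:=p_n\otim$ be the corresponding $n$th level Bernoulli measure (so $\nu_n\in\En$ and $\hn{\nu_n}=H(p_n)$), and set $\mu_n:=\A_n^*\nu_n$. By Lemma~\ref{decyclo}, $\mu_n\in\Eone$ with $\hone{\mu_n}=\tfrac1nH(p_n)$, $\lone{\mu_n}=\int\A_ng\,\d\nu_n$ and $\int f\,\d\mu_n=\int\A_nf\,\d\nu_n$. Expanding the Shannon entropy of $p_n$ via the definition of $p_{n,w}$ (using $\log\D_n(\omega^w)=-n\ll_n(\omega^w)$) gives the exact identity
\[
\frac1n\log Z_n(s)=\hone{\mu_n}-s\sum_{w\in\mathcal W_n}p_{n,w}\,\ll_n(\omega^w).
\]
Now substitute $\qr{approx}$ (valid for $n\ge N_0(\eta,\delta)$ since $\omega^w\in\tilde\Sigma(\delta)$) to replace $\ll_n(\omega^w)$ by $(1-\eta)\A_ng(\omega^w)$, and then use $\var_n\A_ng\to0$ together with $\nu_n([w]_n)=p_{n,w}$ to pass from $\sum_w p_{n,w}\A_ng(\omega^w)$ to $\int\A_ng\,\d\nu_n=\lone{\mu_n}$ up to an error $\ordo{1}$. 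Likewise, each $\omega^w\in X(\alpha,N,\rho,\delta)$ has $\A_nf(\omega^w)\in B(\alpha,\rho)$ for $n\ge N$ and $B(\alpha,\rho)$ is convex, so $\int f\,\d\mu_n\in B(\alpha,\rho+\var_n\A_nf)\subseteq\overline{B(\alpha,3\rho/2)}$ for $n$ large, while the restriction to $E_{N'}$ forces $\lone{\mu_n}>\delta$ (and trivially $\lone{\mu_n}\le\|g\|_\infty$). Collecting these estimates,
\[
\frac1n\log Z_n(s)\le\hone{\mu_n}-s(1-\eta)\lone{\mu_n}+\ordo{1}.
\]

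To close the loop, let $\mathcal K$ be the set of $\mu\in\Mone$ with $\int f\,\d\mu\in\overline{B(\alpha,3\rho/2)}$ and $\lone\mu\ge\delta$; this is weak$^{\star}$-compact, and we set $s^\ast=\sup\{\hone\mu/\lone\mu:\mu\in\mathcal K\}$. The displayed inequality shows $\mu_n\in\mathcal K$ for all large $n$, hence $\hone{\mu_n}\le s^\ast\lone{\mu_n}$ and $\tfrac1n\log Z_n(s)\le(s^\ast-s(1-\eta))\lone{\mu_n}+\ordo{1}$; since $\delta\le\lone{\mu_n}\le\|g\|_\infty$, for any $s>s^\ast/(1-\eta)$ the right-hand side is eventually at most a fixed negative constant, so $Z_n(s)\to0$. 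This forces $\mathcal H^s(\Pi(X(\alpha,N,\rho,\delta)\cap E_{N'}))=0$, whence $\dim\Pi(X(\alpha,N,\rho,\delta)\cap E_{N'})\le s$. Letting $s\downarrow s^\ast/(1-\eta)$, then $\eta\downarrow0$, then taking the supremum over $N'$, we obtain $\dim\Pi(X(\alpha,N,\rho,\delta))\le s^\ast$. Finally, choosing $\mu\in\mathcal K$ with $\hone\mu/\lone\mu\ge s^\ast-\epsilon\ge\dim\Pi(X(\alpha,N,\rho,\delta))-\epsilon$ gives a measure with $\lone\mu\ge\delta$, $\int f\,\d\mu\in\overline{B(\alpha,3\rho/2)}\subset B(\alpha,2\rho)$ and $\dim\Pi(X(\alpha,N,\rho,\delta))\le\hone\mu/\lone\mu+\epsilon$, as demanded by the lemma.

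The step I expect to be the main obstacle is making the manufacture of the invariant measure fully rigorous: besides the entropy identity $\hn{p\otim}=H(p)$ and Abramov's theorem behind Lemma~\ref{decyclo}, one needs the upper semicontinuity of $\mu\mapsto\hone\mu$ on the full shift together with the (genuine) continuity of $\mu\mapsto\lone\mu$, so that the estimate survives the weak$^{\star}$-limit implicit in passing from the family $\{\mu_n\}$ to the supremum $s^\ast$ and the value $s^\ast$ is attained up to $\epsilon$ by an admissible measure. Two further points require care: the uniformization behind $\qr{approx}$, i.e. controlling $\var_n\A_nf$, $\var_n\A_ng$ and the error term over all of $\tilde\Sigma(\delta)$ (this is exactly where the auxiliary sets $E_{N'}$ and countable stability of $\dim$, as well as a little room in the Lyapunov threshold, are spent), and the mildly self-referential shape of the argument, in which $\dim$ is bounded by a supremum over a family to which the auxiliary measures $\mu_n$ must themselves belong; working with $s>s^\ast/(1-\eta)$ and only afterwards letting $\eta\to0$ is what removes the apparent circularity.
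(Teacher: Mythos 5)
Your proof follows the paper's strategy in all essentials: cover $\Pi(X)$ by level-$n$ cylinders, read off an $n$th level Bernoulli measure from the weights $\D_n^s$, use the exact Shannon-entropy identity $\frac1n\log Z_n(s)=\hone{\mu_n}-s\sum_w p_{n,w}\ll_n(\w^w)$, pass from $\ll_n$ to $\A_n g$ via \eqref{approx}, and finally invoke Lemma~\ref{decyclo} to land on a $\T$-invariant measure with the right entropy/Lyapunov ratio. There are two genuine points of divergence. First, you fix a generic exponent $s$ and show $Z_n(s)\to 0$ once $s>s^\ast/(1-\eta)$, where $s^\ast$ is the supremum over the compact family $\mathcal K\subset\Mone$, and only afterwards extract the witness $\mu\in\mathcal K$; the paper instead defines, for each $n$, the pressure exponent $s_n$ solving $\sum_{Y_n}\D_n^{s_n}(\w)=1$, bounds $\dim\Pi(X)\le\liminf_n s_n$, and reads off $s_n\le\frac{1}{1-\epsilon}\hone{\A_n^*\mu_n}/\lone{\A_n^*\mu_n}$ directly, so that $\mu=\A_n^*\mu_n$ for a suitable large $n$. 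The two are formally equivalent; the paper's is a bit shorter since it sidesteps the compactness/semicontinuity bookkeeping, while yours isolates more cleanly the supremum to which the answer must converge. Second, and more substantively, you intersect with the sets $E_{N'}$ and invoke countable stability of Hausdorff dimension so that \eqref{approx} applies uniformly to the chosen representatives. This is a real improvement: the bound $\ll_n(\w)\ge(1-\eta)\A_n g(\w)$ obtained from Lemma~\ref{conv} with $\e=\eta\delta$ needs $\A_n g(\w)\ge\delta$ at the particular scale $n$, and the $\liminf$ condition in the definition of $\tilde\Sigma(\delta)$ does not supply this for a given $n$ uniformly over $\w$; the paper glosses over the point, and your reduction patches it. Two small technicalities you should tighten: (i) $\bigcup_{N'}E_{N'}$ with threshold exactly $\delta$ can miss points of $\tilde\Sigma(\delta)$ whose $\liminf$ equals $\delta$ while dipping below it infinitely often, so take $E_{N'}=\{\w:\A_m g(\w)\ge\delta'\ \forall m\ge N'\}$ for some $\delta'<\delta$; and (ii) $\lone{\mu_n}=\int\A_n g\,\d\nu_n\ge\delta-\var_n\A_n g$, not strictly $>\delta$, so $\mathcal K$ should be defined with a slightly reduced Lyapunov cut-off (both adjustments are harmless because the lemma is anyway applied along $\delta\to 0$). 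The worries you raise at the end are fine: on the full shift $\hone{}$ is upper semicontinuous by expansiveness, $\lone{}$ is weak$^\star$-continuous since $g$ is continuous, and the $\eta\to 0$ passage indeed removes the apparent circularity.
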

Before proving this lemma we show how it can be used to deduce
\eqref{ubnd}. 
\begin{proof}[Proof of \eqref{ubnd}]
  First note that, for any fixed $\rho>0$ and $\delta>0$, the set
  $X_{\alpha}\cap\tilde{\Sigma}(\delta)$ is contained in the
  increasing union $\cup_{N\in\N} X(\alpha,N,\rho,\delta$ and thus
  \[
  \dim \Pi (X_\alpha \cap \tilde{\Sigma}(\delta)) 
  \leq \sup_{N} \dim\Pi (X(\alpha,N,\rho,\delta)).
  \]
  For any $\epsilon,\delta>0$ and any sequence $\rho_n$, decreasing
  to zero as $n\to\infty$, we can use \lemref{notso} to find a
  sequence of invariant measures $\mu_n\in\Mone$ with $\int
  f\d\mu_n\in B(\alpha,2\rho_n)$, $\lone{\mu_n}>\delta$ and
  \[
  \dim \Pi (X_\alpha)\leq \dim\Pi (X(\alpha,N_n,\rho_n, \delta)) +
  \epsilon/2 \leq \frac{\hone{\mu_n}}{\lone{\mu_n}}+\epsilon.
  \]

  If $\mu_{\delta}$ is any weak$^{\star}$-limit of $\mu_n$ it
  clearly follows that $\int f\d\mu_{\delta}=\alpha$, and from the
  upper semicontinuity of entropy  $\mu\to \hone{\mu}$ (see \cite{W} Theorem 8.2)
  and the continuity of $\mu\to\lone\mu$ it follows that
  \[
  \dim\Pi
  (X_{\alpha}\cap\tilde{\Sigma}(\delta))\leq
  \frac{\hone{\mu_{\delta}}}{\lone{\mu_{\delta}}}+\epsilon,
  \]
  where $\lone{\mu_{\delta}}>\delta$. This holds for any
  $\delta>0$ so by taking a countable union of $\delta_n$ where
  $\delta_n\rightarrow 0$ we get
  \[
  \dim\Pi
  (X_{\alpha}\cap\tilde{\Sigma})\leq
  \sup_{n}\left\{
     \frac{\hone{\mu_{\delta_n}}}{\lone{\mu_{\delta_n}}}+\epsilon
  \right\}.
  \]
  Since $\epsilon$ was arbitrary, this completes the proof of
  \eqref{ubnd}. 
\end{proof}

\subsection*{Proof of Lemma \ref{notso}}
Fix $\epsilon>0$, $N\in \N$, $\alpha,\delta,\rho>0$ and let
$X=X(\alpha,N,\rho,\delta)$.  By compactness, $f$ and $g$ are
uniformly continuous and if $N$ is sufficiently large then for all
$n\geq N$,
\begin{equation}\label{Xineq}
  |\A_nf(\tau)-\A_nf(\w)|\leq \rho/2 
\end{equation}
whenever $[\w]_n = [\tau]_n$. Furthermore, by 
\eqref{approx} we can assume that for all $n\geq N$ and for all $\w \in X$,
\begin{equation}\label{approx2}
  \A_n g(\w) \leq (1+\epsilon) \ll_n(\w). 
\end{equation}

For $n\geq N$, let $Y_n$ consist of all cylinders $[\w]_n \in \CF_n$
which contain a point in $X$, \ie $Y_n = \pi_n(X)$. For each $n$,
define $s_n$ to be the solution to
\begin{equation}\label{sndef}
  \sum_{[\w]_n\in Y_n} D_n^{s_n}(\w)=1.
\end{equation}
From the definition of Hausdorff dimension, it then immediately
follows that 
$$\dim\Pi (X) \leq \liminf_{n\to\infty} s_n,$$
since the projections of the elements of $Y_n$ form a sequence of covers 
of $\Pi (X)$ by intervals having diameters decreasing to zero as $n\to\infty$.

Let $\nu_n$ be the probability defined by \eqref{sndef} on the
$n$-cylinders, \ie
\[ \nu_n([\w]_n) = \D_n^{s_n}(\w) = e^{-ns_n \ll_n(\w)} \] if
$[\w]\in Y_n$ and zero otherwise. Let $\mu_n$ be the corresponding
$n$th level Bernoulli measure defined by
$\nu_n=\mu_n\vert_{\CF_n}$. That is, $\mu_n = \nu_n\otim$ where
$\nu_n$ is interpreted as a measure in $\CM(\CA^n)$.  It is clear
from \eqref{Xineq} that $\int \A_n f\d\mu_n\in{}B(\alpha,2\rho)$,
since each cylinder $[\w]_n\in Y_n$ only contains $\tau\in\Sigma$
such that \( \A_nf (\tau)\in B(\alpha,2\rho)\).

Evaluating the Shannon entropy $H(\nu_n)$ of $\nu_n$ gives the
equality
\begin{equation*}
  H(\nu_n) = n s_n \int \ll_n \d\nu_n,
\end{equation*}
where the integral denotes the expected value
$\sum_{[\w]_n}\nu([\w]_n) \, \ll_n(\w)$ of $\ll_n$ with respect to
$\nu_n$.  Since $\mu_n=\nu_n\otim$ and $\ll_n$ is
$\CF_n$-measurable, it is easy to see that $H(\nu_n) =
\hone{\mu_n}$
and that $\int\ll_n\d\mu_n=\int\ll_n\d\nu_n$.

Thus we have the identity
\begin{equation*}
  \hone{\mu_n}= s_n \int \ll_n \d\mu_n.
\end{equation*}

In view of \eqref{approx2}, this means that
\begin{equation}\label{Ll}
  \hn {\mu_n} \geq s_n (1-\epsilon) \int \A_ng \d\mu_n 
  = (1-\epsilon) s_n \ln{\mu_n}
\end{equation}
since $\mu_n$ is $n$th level Bernoulli and $\T^n$-invariant. Thus
\begin{equation}
  s_n \leq \frac{1}{(1-\e)}\, \frac{\hn{\mu_n}}{\ln{\mu_n}}
\end{equation}

To complete the proof of Lemma \ref{notso}, simply note that Lemma
\ref{decyclo} implies that $\A_n^* \mu_n$ belongs to $\Eone$ and
that $\hone{A^*_n\mu_n} = \frac1n \hn{\mu_n}$ and that
$\lone{A^*_n \mu_n}=\frac1n \ln{\mu_n}$. Moreover,
$\int \A_nf\d\nu_n\in B(\alpha,2\rho)$.  \qed

\section{Proof of Theorem \ref{main2}}

We now need to consider the sequences $\w$ where
$\liminf_{n\to\infty} \A_ng(\w)=0$ and we cannot apply Theorem
\ref{main}. We start by showing that if the limit of the Birkhoff
average for such a sequence exists it must necessarily lie in $A$
(see (\ref{A}) for the definition).
\begin{lemma}\label{zl}
  Let $\{n_j\}_{j\in\N}$ be a subsequence of $\N$. If for any
  $\w\in\Sigma$, $\lim_{j\to\infty} \A_{n_j}$$g(\w)=0$, then we
  have that $\lim_{j\to\infty}\A_{n_j}f(\w)\in A$ if the limit
  exists. In particular, this shows that
  $\lim_{n\to\infty}\A_ng(\w)=0$ means
  $\lim_{n\to\infty}\A_nf(\w)\in A$, if the limit exists.
\end{lemma}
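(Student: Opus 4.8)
The plan is to push the hypothesis on the orbit of $\w$ into a statement about a $\T$-invariant measure and then to read off the conclusion from the zero set of $g$. (If $\mathcal I=\emptyset$ then $g$ is bounded below by a positive constant, so the hypothesis $\A_{n_j}g(\w)\to0$ is vacuous and there is nothing to prove; assume $\mathcal I\neq\emptyset$.) Fix $\w\in\Sigma$ and a subsequence $\{n_j\}$ with $\A_{n_j}g(\w)\to 0$, and suppose $\beta:=\lim_{j}\A_{n_j}f(\w)$ exists. First I would form the empirical measures $\mu_j=\frac1{n_j}\sum_{i=0}^{n_j-1}\delta_{\T^i\w}\in\CM(\Sigma)$; since $\Sigma$ is compact, a further subsequence of the $\mu_j$ converges in the weak$^{\star}$ topology to some $\mu\in\CM(\Sigma)$. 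The standard computation $\mu_j\circ\T^{-1}-\mu_j=\frac1{n_j}(\delta_{\T^{n_j}\w}-\delta_\w)\to 0$ shows $\mu\in\Mone$, and since $f$ and $g$ are continuous, $\int g\d\mu=\lim_j\A_{n_j}g(\w)=0$ and $\int f\d\mu=\lim_j\A_{n_j}f(\w)=\beta$.

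Next I would describe $Z:=\{\tau\in\Sigma:g(\tau)=0\}$. Because $0<T_i'\le 1$, with $T_i'(y)<1$ for $y\neq x_i$ and $T_i'(x_i)=1$ precisely when $i\in\mathcal I$, the function $g$ is nonnegative and $g(\tau)=0$ if and only if $\tau_1\in\mathcal I$ and $\Pi(\T\tau)=x_{\tau_1}$. As $g\ge 0$ is continuous and $\int g\d\mu=0$, $\mu$ is carried by the closed set $Z$, hence by $\T$-invariance it is carried by $\hat Z:=\bigcap_{k\ge 0}\T^{-k}Z$. For $\tau\in\hat Z$ one has, for every $k\ge 1$, both $\tau_k\in\mathcal I$ and $\Pi(\T^{k}\tau)=x_{\tau_k}$; using $\Pi(\T^{k}\tau)=T_{\tau_{k+1}}(\Pi(\T^{k+1}\tau))=T_{\tau_{k+1}}(x_{\tau_{k+1}})=x_{\tau_{k+1}}$ this gives $x_{\tau_k}=x_{\tau_{k+1}}$ for all $k$. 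At this point I would invoke that $x_1,\dots,x_m$ are pairwise distinct — a consequence of the hypothesis $T_i(0,1)\cap T_j(0,1)=\emptyset$, since a coincidence $x_i=x_j$ would force $T_i([0,1])$ and $T_j([0,1])$ to overlap in their interiors — to conclude $\tau_k=\tau_{k+1}$ for every $k$, so $\tau=(i,i,i,\dots)$ for some $i\in\mathcal I$. Hence $\mu$ is supported on the finite set $\{(i,i,\dots):i\in\mathcal I\}$ of $\T$-fixed points.

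It follows that $\mu=\sum_{i\in\mathcal I}p_i\,\delta_{(i,i,\dots)}$ with $p_i\ge 0$ and $\sum_{i\in\mathcal I}p_i=1$, whence $\beta=\int f\d\mu=\sum_{i\in\mathcal I}p_i\,\alpha_i$ with $\alpha_i=f(i,i,\dots)$. This is a convex combination of numbers all lying in the interval $A=[\min_{i\in\mathcal I}\alpha_i,\max_{i\in\mathcal I}\alpha_i]$, so $\beta\in A$, which is the assertion; the last statement of the lemma is the special case $n_j=j$.

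I expect the only delicate point to be the middle step: identifying $Z$ exactly (which is where one must use $0<T_i'\le 1$ with equality only at the indifferent fixed points) and then checking that iterating invariance collapses $\hat Z$ onto the constant sequences — the place where pairwise distinctness of the $x_i$, equivalently the disjointness hypothesis, is genuinely needed. The rest (weak$^{\star}$ compactness, invariance of the limit, continuity of $f$ and $g$) is routine. A measure-free variant would argue directly: by Markov's inequality the proportion of indices $i<n_j$ for which $g(\T^{i}\w),\dots,g(\T^{i+L}\w)$ are all below a small threshold tends to $1$ for each fixed $L$, and for such $i$ these conditions force $\w_{i+1}=\dots=\w_{i+L+1}\in\mathcal I$, so $\A_{n_j}f(\w)$ is, up to $\var_{L+1}f$, an average of the $\alpha_i$; letting $L\to\infty$ again gives $\beta\in A$. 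I would present the measure-theoretic version, as it is cleaner.
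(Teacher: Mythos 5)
Your argument is correct, but it follows a genuinely different route from the one in the paper. The paper's proof is a short, direct averaging argument: it asserts that for any $\epsilon>0$ there is a $\delta>0$ such that $g(\tau)<\delta$ implies $\text{dist}(f(\tau),A)<\epsilon/2$, then uses Markov's inequality and $\A_{n_j}g(\w)\to 0$ to show that the proportion of indices $0\le i<n_j$ with $g(\sigma^i\w)\ge\delta$ tends to zero, and averages. The unstated content behind that opening compactness claim is exactly what you verify explicitly, namely that $\{g=0\}$ is the finite set of constant sequences $(i,i,\dots)$ with $i\in\mathcal I$; this rests on the fact that $x_i\notin T_j([0,1])$ for $j\ne i$, which follows from the disjointness hypothesis together with $x_i$ being the unique fixed point of $T_i$. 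Your main proof, via weak$^{\star}$ limits of empirical measures, uses heavier machinery than the paper's, but it buys robustness: by passing to an invariant limit measure and intersecting $\sigma^{-k}\{g=0\}$ over all $k$, you only need pairwise distinctness of the $x_i$ rather than a full description of $\Pi^{-1}(x_i)$. Your sketched measure-free variant (Markov's inequality applied to $L$-blocks of consecutive small $g$-values, forcing a run of $L+1$ equal symbols in $\mathcal I$) is closest in spirit to the paper's argument but builds in the same extra robustness. Either of your versions would serve; the paper opts for brevity at the cost of leaving the identification of $\{g=0\}$ implicit.
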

\begin{proof}
  Let $\epsilon,l>0$. We can find $\delta>0$ such that $g(\w)<\delta$
  implies that $\text{dist}(f(\omega),A)<\frac{\epsilon}{2}$. If
  $\frac{1}{n_j} \A_{n_j}g(\w)<\frac{\delta}{l}$ then
  $g(\T^i\w)<\delta$ for at least $\frac{n_j(l-1)}{l}$ of the values
  $1\leq i\leq n_j$. Thus $\text{dist}(\A_{n_j}f(\w),A)\leq
  \frac{(l-1)}{l}\epsilon+\frac{K}{l}$, where
  $K=\sup_\w \operatorname{dist}(f(\w),A)$. Since $l$ is arbitrary
  this completes the proof.
\end{proof}
Hence we know that when $\alpha\notin A$, we do not need to consider 
those $\omega$ which satisfy $\liminf_{n\rightarrow\infty}A_ng(\omega)=0$, 
since then, if $\lim_{n\rightarrow\infty} A_nf(\omega)$ exists, it can only 
take the values in $A$. Thus the first part of Theorem \ref{main2} follows 
immediately from Theorem \ref{main}.

By renaming the symbols, we may arrange so that 
\[ A=[a_1,a_2] = [f(\w_1), f(\w_2) ], \]
where $\w_1=(1,1,1,\dots)$ and $\w_2=(2,2,2,\dots)$ correspond to the
the indifferent fixed points $x_1 =\Pi(\w_1)$ and $x_2=\Pi(\w_2)$,
respectively.  
We denote by $\delta_{1}$
and $\delta_{2}$ the Dirac measures on the sequences
$(1,1,1,\dots)$ and $(2,2,2,\dots)$.

We will need to consider separately the two cases where $\alpha\in
(a_1,a_2)$ and when
$\alpha\in\{a_1,a_2\}$. For the first case we need
the following lemma.
\begin{lemma}\label{interior}
  For any $\alpha\in (a_1,a_2)$ and $\nu\in\Eone$ such that
  $\lone{\nu}>0$ we can find $\mu\in\Mone$ such that $\int
  f\d\mu=\alpha$ and
$$\frac{\hone\mu}{\lone\mu}=\frac{\hone\nu}{\lone\nu}.$$
\end{lemma}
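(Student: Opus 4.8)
The plan is to realize the target measure $\mu$ as a convex combination of the given ergodic measure $\nu$ with the two Dirac measures $\delta_1$ and $\delta_2$ sitting on the indifferent fixed points. Since $\delta_1$ and $\delta_2$ are supported on fixed points of the shift, they are $\T$-invariant, they have zero entropy, and — crucially — zero Lyapunov exponent (because $T'_i(x_i)=1$ means $g(\w_i)=-\log T'_i(x_i)=0$ for $i=1,2$). Consequently, forming $\mu = p_1\delta_1 + p_2\delta_2 + (1-p_1-p_2)\nu$ with $p_1,p_2\geq 0$ and $p_1+p_2<1$ gives, by affineness of entropy and of the integral defining the Lyapunov exponent on $\Mone$,
\[
\hone\mu = (1-p_1-p_2)\,\hone\nu, \qquad \lone\mu = (1-p_1-p_2)\,\lone\nu,
\]
so that the ratio $\hone\mu/\lone\mu = \hone\nu/\lone\nu$ is automatically preserved (and the denominator stays positive since $\lone\nu>0$). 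This takes care of the ratio identity for free; the only real work is to choose $p_1,p_2$ so that $\int f\d\mu=\alpha$.

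So the key step is the following. Write $\beta = \int f\d\nu$; this is some value in $[\alpha_{\min},\alpha_{\max}]$. We have $\int f\d\delta_1 = f(\w_1) = a_1$ and $\int f\d\delta_2 = f(\w_2) = a_2$, and
\[
\int f\d\mu = p_1 a_1 + p_2 a_2 + (1-p_1-p_2)\beta.
\]
As $(p_1,p_2)$ ranges over the triangle $\{p_1,p_2\ge 0,\ p_1+p_2\le 1\}$, this expression ranges over the convex hull of $\{a_1,a_2,\beta\}$, which is exactly the interval between $\min\{a_1,\beta\}$ and $\max\{a_2,\beta\}$; in particular it contains the whole open interval $(a_1,a_2)$ regardless of where $\beta$ lies. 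Hence for the given $\alpha\in(a_1,a_2)$ there is a choice of $p_1,p_2$ (with $p_1+p_2<1$ since $\alpha\ne a_1,a_2$ forces the weight on $\nu$ to be positive when $\beta\notin(a_1,a_2)$, and one can always keep it positive when $\beta\in(a_1,a_2)$) achieving $\int f\d\mu=\alpha$. Explicitly one can first move along the segment from $\nu$ to whichever of $\delta_1,\delta_2$ points toward $\alpha$, and then, if needed, add a small amount of the other Dirac mass; a one-line intermediate-value argument on the continuous map $(p_1,p_2)\mapsto\int f\d\mu$ restricted to an appropriate segment delivers the claim.

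The main (very mild) obstacle is bookkeeping: one must make sure the coefficient $1-p_1-p_2$ on $\nu$ stays strictly positive, because if it were zero we would have $\mu$ a combination of the two Diracs with $\lone\mu=0$ and the ratio undefined. This is where $\alpha\in(a_1,a_2)$ and $\lone\nu>0$ are used — since $\alpha$ is strictly interior to $A$ we never need to push all the mass onto the endpoints, so we can keep the weight on $\nu$ positive (indeed bounded below, though we do not need that). Everything else — affineness of $\mu\mapsto\hone\mu$ and $\mu\mapsto\lone\mu$, invariance of the Diracs, continuity of $p\mapsto\int f\d\mu$ — is standard and requires no serious estimate. I would present the construction, verify $\mu\in\Mone$ and the three displayed facts, and conclude by the intermediate-value choice of $(p_1,p_2)$.
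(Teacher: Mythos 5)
Your proof is correct and takes essentially the same approach as the paper: form a convex combination of $\nu$ with Dirac measures at indifferent fixed points (which contribute zero entropy and zero Lyapunov exponent), use affineness of $\hone{\cdot}$ and $\lone{\cdot}$ so the ratio is preserved, and tune the weights to hit $\int f\d\mu=\alpha$. The only cosmetic difference is that the paper assumes WLOG $\int f\d\nu\geq\alpha$ and mixes $\nu$ with the single Dirac $\delta_1$, whereas you allow both $\delta_1$ and $\delta_2$ and argue via the convex hull over the triangle; both work.
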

\begin{proof}
  We assume without loss
  of generality that $\int f\d\nu\geq\alpha$. We can then find
  $p_1,p_2>0$ such that $p_1+p_2=1$, $p_2>0$ and $p_1 a_1+p_2\int
  f\d\nu=\alpha$. We then let $\mu=p_1\delta_{1}+p_2\nu$.
  and deduce that
$$\int f\d\mu=p_1 a_1+p_2\int f\d\mu=\alpha,$$
and
$$\frac{\hone\mu}{\lone\mu}=\frac{p_2 \hone\nu}{p_2\lone\nu}
  =\frac{\hone\nu}{\lone\nu}.$$
\end{proof}

For $\alpha\in (a_1,a_2)$ and $\epsilon>0$ we take
$\nu\in \Eone$ such that $\frac{\hone\nu}{\lone\nu}\geq
\dim\Lambda-\epsilon$. By applying Lemma \ref{interior} we can find a
measure $\mu\in\Mone$ such that $\int f\d\mu=\alpha$ and
$\frac{\hone\mu}{\lone\mu}\geq\dim\Lambda-2\epsilon$.  By combining
Lemmas \ref{decyclo} and \ref{approxim} we can find a measure
$\eta\in\Eone$ such that $\int f\d\eta=\alpha$ and
$\frac{\hone\eta}{\lone\eta}\geq \dim\Lambda-2\epsilon$. It thus
follows that $\eta(\Lambda_{\alpha})=1$ and by using Lemma \ref{hr} we
can see that
$$\dim\Lambda_{\alpha}\geq\dim\Lambda-2\epsilon.$$

To complete the proof for the second case where
$\alpha\in\{a_1,a_2\}$ we follow a similar approach
to that used by Gelfert and Rams in \cite{GR}.

Our strategy is to look at sequences which alternate between a
hyperbolic measure of large dimension and the parabolic measure at the
fixed point. We arrange that they spend more time at the fixed point
and so this will determine the Birkhoff average. However, if the
proportion of time at the fixed point does not grow to quickly with
relation to the proportion of time described by the hyperbolic measure, 
then the dimension can be given by the hyperbolic measure.

Without loss of generality, we may assume that $\alpha=a_1$.
With the notation introduced before Lemma~\ref{interior}, we study the
behaviour about the indifferent fixed point $x_1=\Pi(\w_1)$, where
$\w_1=(1,1,1,\dots)$ and where $\int f\d\delta_{1}=\alpha$ and
$\lone{\delta_1}=0$.  We start by taking a measure $\mu \in\Eone$ such
that $\int f\d\mu=\beta$ for some $\beta\not=\alpha$ which also
satisfies $\lone\mu>0$.

We now combine these two ergodic measures to find a new
(non-invariant) measure with high dimension but for which the Birkhoff
averages of $f$ tend to $\alpha$ at almost every point.  For
$\e>0$ and $N\geq 1$, we define $\Omega(\e,N)$ to be the set of $\w\in
\Sigma$ such that for all $n\geq N$ 
\begin{align}
\A_{n}f(\omega)\in B(\beta,\e), \\
\label{diameter}
\A_{n}g(\omega)\in B(\lone\mu,\e), \\ 
\label{ent} -\frac{1}{n}\log \mu([\omega_1,\ldots,\omega_{n}])\in 
    B(\hone\mu,\e). 
\end{align}
It follows from the Birkhoff Ergodic Theorem, the
Shannon--McMillan--Breiman Theorem and Egorov's Theorem that for any
fixed $\delta>0$, we can find a decreasing sequence $\e_i>0$, such that
\begin{equation}\label{muOmega}
\mu(\Omega(\e_i',i))\geq 1-\delta,
\end{equation}
where \( \lim_{i\rightarrow\infty}\e_i'=0 \)

By the uniformity of the conclusion of Lemma~\ref{conv} and since
$\var_n \A_n f$ and $\var_n \A_n g$ uniformly decrease to zero, 
we can, for $i=1,2,\dots$, choose another $\e''_i$ decreasing to zero,
so that for all $\omega\in\Sigma$ and all $n\geq i$ we have
\begin{align}
\label{varf} \var_n \A_nf(\omega)\leq\e_i'' \\
\label{varg} \var_n \A_ng(\omega)\leq\e_i'' \\
\label{diamest} \ll_n(\omega)\leq \A_ng(\omega)+\e_i''.
\end{align}

Finally, let $\e_i = \max\{ \e_i', \e_i''\}$ and let $\Omega(\e_i) =
\Omega(\e_i,i)$. Note that $\e_i$, by the construction above, is
decreasing. 
We also need another sequence of integers $\{k_i\}$ such that
$\displaystyle\lim_{i\rightarrow\infty}k_i=\infty$ but
\begin{equation}\label{dominance}
\lim_{i\rightarrow\infty} k_i\epsilon_i=0.
\end{equation}

For each $i$ we define two measures $\nu_i\in\CM(\CF^{i})$ and
$\eta_i\in\CM(\CF^{ik_i})$ where $\nu_i$ simply gives equal weight to
any cylinder containing an element of $\Omega(\epsilon_i)$ and
$\eta_i$ is simply the Dirac measure on the cylinder $[1,1,1,\dots,1]$
of length $i k_i$. For $q\in\N$ let $n_q=\sum_{i=1}^q i(1+k_i)$.
Define the probability $\eta\in \CM(\Sigma)$ to be the distribution of
a sequence of independent blocks that alternately have distribution
$\nu_i$ and $\eta_i$, for $i=1,2,3,\dots$. That is, let
\[
\eta = \otimes_{q=1}^{\infty}
\left[
  \nu_q\circ\sigma^{n_{q-1}} \otimes \eta_q\circ\sigma^{n_{q-1}+q}
\right].
\]
The measure $\eta$ is not invariant. However, the behaviour of the
Birkhoff average $A_n f(\w)$ of a continuous function $f$ for an
$\eta$-typical point $\w$ will approach the value $f(\w_1)$. This is
because the proportion of $n$'s, $1\leq n\leq N$, such that
$\sigma^n\w$ are close to $\w_1$ approaches $1$ as $N\to\infty$.
  
\begin{lemma}
  \begin{enumerate}
\item[]
  \item[1.]  For $\eta$ almost all $\omega$ we have $\displaystyle
\lim_{n\rightarrow\infty} \A_n f(\w)=\alpha$,
  \item[2.]  $\dim \eta\circ\Pi^{-1} = \frac{\hone\mu}{\lone\mu}$.
  \end{enumerate}
\end{lemma}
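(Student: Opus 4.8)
The plan is to verify the two claims separately, each by combining the structure of $\eta$ as an alternating product of independent blocks with the estimates \eqref{varf}--\eqref{diamest} and the defining properties of $\Omega(\e_i)$.

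\emph{Claim 1 (Birkhoff averages).} First I would track, for a fixed large $n$, how much of the orbit segment $\w,\T\w,\dots,\T^{n-1}\w$ falls inside the long $\eta_q$-blocks (the constant string of $1$'s of length $ik_i$) versus inside the short $\nu_q$-blocks (of length $i$). Since $n_q=\sum_{i=1}^{q} i(1+k_i)$ and $k_i\to\infty$, the proportion of indices in $\{1,\dots,n_q\}$ lying in $\nu$-blocks is $\sum_{i\le q} i \big/ \sum_{i\le q} i(1+k_i)$, which tends to $0$. So an $\eta$-typical point spends asymptotically all its time in the parabolic blocks, where $f$ is uniformly close to $f(\w_1)=\alpha$ (using uniform continuity of $f$ and that the constant-$1$ block forces $\T^j\w$ into a small cylinder around $\w_1$). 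Writing $\A_n f(\w)$ as a weighted average of its contributions from the two kinds of blocks, the $\nu$-block contribution is bounded in absolute value by $\|f\|_\infty$ times a vanishing weight, and the $\eta$-block contribution is within $o(1)$ of $\alpha$; hence $\A_n f(\w)\to\alpha$ for $\eta$-a.e.\ $\w$. One must also handle $n$ not of the form $n_q$, but that only adds a single partial block whose contribution is again controlled by $\|f\|_\infty$ times a weight that is $o(1)$ relative to $n$.

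\emph{Claim 2 (dimension of $\eta\circ\Pi^{-1}$).} The plan is to compute, for $\eta$-a.e.\ $\w$, the local dimension $\lim_{n\to\infty}\frac{\log \eta(\pi_n^{-1}\pi_n\w)}{\log D_n(\w)}$ and show it equals $\hone\mu/\lone\mu$, then invoke the fact that an exact-dimensional computation at $\eta$-a.e.\ point gives $\dim(\eta\circ\Pi^{-1})$ via the projected intervals $I_n(\w)$ (as in the proof of Lemma~\ref{hr}, using $-\frac1n\log D_n(\w)=\ll_n(\w)$). By independence of the blocks, $-\log\eta([\w]_{n_q})$ is the sum over $i\le q$ of $-\log\nu_i(\text{its }i\text{-block})$ (the $\eta_i$ Dirac blocks contribute $0$). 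Since $\nu_i$ is uniform on cylinders meeting $\Omega(\e_i)$, for $\w$ in the relevant block $-\frac1i\log\nu_i(\cdot)$ is within $\e_i$ of $\hone\mu$ by \eqref{ent} together with a counting estimate (the number of such cylinders, hence $\log(1/\nu_i)$ on each, matches $-\log\mu([\cdot]_i)$ up to $i\e_i$ because $\mu$-mass is spread roughly evenly over $\Omega(\e_i)$ to within $e^{\pm i\e_i}$; this is where one uses $\mu(\Omega(\e_i))\ge 1-\delta$). Summing and dividing by $n_q=\sum i + \sum ik_i$, the denominator is dominated by $\sum ik_i$ while the numerator is $\sum i(\hone\mu+O(\e_i))$, so the ratio $\frac{-\log\eta([\w]_{n_q})}{n_q}\to 0$; this forces us instead to compare numerator and denominator of the local-dimension ratio directly. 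Likewise $-\log D_n(\w)=n\ll_n(\w)$, and using \eqref{diameter}, \eqref{diamest} on $\nu$-blocks and $\ll_i\approx 0$ on $\eta$-blocks (the diameters at the parabolic point shrink subexponentially), one gets that over the full segment $-\log D_{n_q}(\w)$ equals $\sum_{i\le q} i(\lone\mu+O(\e_i)) + (\text{subexponential contribution from }\eta\text{-blocks})$. The key point is that the $\eta$-blocks contribute $0$ to both $-\log\eta$ (exactly) and, asymptotically negligibly, to $-\log D_n$ relative to... no: they contribute to $-\log D_n$ only subexponentially per block, so dividing the two sums, both of which are dominated by the $\nu$-block contributions once we show the $\eta$-block diameter contribution is $o(\sum i)$ — this is exactly the role of \eqref{dominance}, $k_i\e_i\to 0$, which guarantees the parabolic stretching over a block of length $ik_i$ is $o(i)$ in log-scale. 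Hence $\frac{\log\eta([\w]_n)}{\log D_n(\w)}\to \frac{\sum i\,\hone\mu}{\sum i\,\lone\mu}=\frac{\hone\mu}{\lone\mu}$, and an interpolation argument for general $n$ between consecutive $n_q$ completes it.

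\emph{Main obstacle.} The delicate step is the dimension computation, specifically controlling the contribution of the long parabolic blocks to $-\log D_n(\w)$: the diameters $D_n$ do shrink there, just not exponentially, and one must show that over a block of length $ik_i$ this shrinkage is negligible (in $\log$) compared to the $i$ hyperbolic steps of the neighbouring $\nu$-block — this is precisely what \eqref{dominance} is engineered to give, and getting the uniform subexponential estimate near the indifferent fixed point (and its interplay with \eqref{diamest}) is the crux. A secondary subtlety is the interpolation over $n$ in the range $n_{q-1}\le n< n_q$, where one is partway through an $\eta$-block; here one uses that both $\log\eta([\w]_n)$ and $\log D_n(\w)$ change in a controlled (monotone, slowly varying in the $\eta$-block) way so the ratio cannot stray from $\hone\mu/\lone\mu$ in the limit.
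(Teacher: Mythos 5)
Your proposal follows the same overall route as the paper's proof: Part 1 is the same block bookkeeping along the subsequence $n_q$, and Part 2 is a mass-distribution / local-dimension argument in which the dominance condition \eqref{dominance} renders the parabolic blocks negligible, while the uniform distribution $\nu_i$ on the cylinders meeting $\Omega(\e_i)$ together with \eqref{ent} and $\mu(\Omega(\e_i))\geq 1-\delta$ supplies the entropy estimate, and \eqref{diameter} supplies the Lyapunov estimate. Two points where the paper's execution is lighter than your sketch suggests: (i) the parabolic-block contribution to $-\log D_{n_q}$ is controlled not by any direct estimate of how diameters shrink near the indifferent fixed point, but simply by applying \eqref{diamest} once at $n_q$ and then bounding the Birkhoff sum of $g$ over the block using $g(\w_1)=0$ and $\var_{ik_i}(A_{ik_i}g)\leq\e_i$ from \eqref{varg}, giving a contribution $\leq ik_i\e_i=o(i)$ by \eqref{dominance}; and (ii) the passage from cylinder estimates to Euclidean balls (your ``interpolation'' step) is dispatched by choosing, for each $q$, a radius $r$ sandwiched between the $q$- and $(q{+}1)$-level diameter bounds so that $B(x,r)$ meets at most three $n_q$-cylinders of positive $\eta$-measure, whence $\nu(B(x,r))\leq 3\,\eta([\w]_{n_q})$ directly and no interpolation over intermediate $n$ is needed.
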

\begin{proof}
  Note that since $f$ is
  bounded and
  \begin{equation}\label{nqq}
  \lim_{q\rightarrow\infty}\frac{n_q-n_{q-1}}{n_q}=0
\end{equation}

  to show part 1 we can just consider the limit along the subsequence $n_q$.

  It follows from the definition of $\eta$ that for $\eta$-almost all
  $\omega$
$$\frac{\sum_{i=1}^q i(\beta-2\epsilon_i)+\sum_{i=1}^q
  k_ii(\alpha-2\epsilon_i)}{n_q}\leq\A_{n_q}f(\omega)$$
$$\leq\frac{\sum_{i=1}^qi(\beta+2\epsilon_i)
  +\sum_{i=1}^qk_ii(\alpha+2\epsilon_i)}{n_q}.$$
The first part then follows since
$\displaystyle\lim_{q\rightarrow\infty}\frac{\sum_{i=1}^qi}{n_q}=0$ and
$\displaystyle\lim_{q\rightarrow\infty}\frac{\sum_{i=i}^qk_i
  i(\alpha+\epsilon_i)}{n_q}=\alpha$.

For the second part recall that for any probability measure $\nu$ on $[0,1]$,
if for $\nu$-almost all $x$
$$\liminf_{r\rightarrow 0^+}\frac{\log{\nu(B(x,r))}}{\log r}\geq s,$$
then $\dim\nu\geq s$. (Here $B(x,r)=\{y:|y-x|<r\}$.)  We let
$\nu=\eta\circ\Pi^{-1}$ and in order to bound the ratio above we will now
consider bounds on the quantities \(-\log\eta[\omega_1,\ldots,\omega_{n}]\) and
\({n\ll_n(\omega)}\).

By using conditions \qr{ent} and \qr{dominance}, we may deduce that the
entropy of the distribution of the independent blocks
$(\w_{n_{i-1}},\w_{n_{i-1}+1},\dots,\w_{n_{i-1}+i-1})$ is at
least $\log(1-\delta) + i (\hone\mu-\e_i)$. Since we use the uniform
distribution, we may deduce that for all $\omega$
\begin{equation}
-\log\eta([\omega_1,\ldots,\omega_{n_q}])\geq
 q\,\log(1-\delta)+\sum_{i=1}^q i (\hone\mu-\e_i). \label{ent0}
\end{equation}
Note that $q\log (1-\delta)$ is of asymptotic order
$\ordo{\sum_{i=1}^q i \hone\mu}$ for large $q$.

For estimating the diameters of the cylinders, we note that
\begin{align}
\nonumber
  n_q\ll_{n_q}(\omega)&\leq \sum_{j=0}^{n_q-1} g(\sigma^j
  \omega)+n_q\epsilon_q\qquad \text{(by \eqref{diamest})}\\
\nonumber
  &\leq\sum_{i=1}^{q}\left( \sum_{j=n_{i-1}}^{n_i-1}
    g(\sigma^{j}\omega)
      +\e_ii(1+k_i)\right)\qquad \text{(since $\e_i\geq \e_q$)}\\
&=\sum_{i=1}^q\left( i\, \A_i g(\sigma^{n_{i-1}}\omega)
+(i k_i)\, \A_{ik_i} g(\sigma^{n_{i-1}+i}\w)+\e_ii(1+k_i)\right).
\label{finnn}
\end{align}
Since the $[\sigma^{n_{i-1}}\omega]_i$ contains an $\w$ from
  $\Omega(\e_i)$ and since $\sigma^{n_{i-1}+i}\w \in [\w_0]_{ik_i}$
  we obtain, by \eqref{diameter} and \eqref{varg}, that \qr{finnn} is
  less than
\begin{align}
\nonumber
  &\sum_{i=1}^q
  \left(i(\lone\mu+\epsilon_i + \var_i g)+ik_i(A_{ik_i}g(\omega_0)+
    \var_{ik_i} g)+\e_ii(1+k_i)\right) \\
  \leq&\sum_{i=1}^q i(\lone\mu+ 3\e_i + 2\e_i k_i). 
\label{diam0}
\end{align}
Recall that by condition (\ref{dominance}) $\lim_{i\rightarrow\infty}
\epsilon_i k_i=0$.

We now fix $q>0$ and choose $r$ such that
$$\exp\left(-\sum_{i=1}^q i(\lone\mu+ 3\e_i + 2\e_i k_i)\right)\geq r
>\exp\left(-\sum_{i=1}^{q+1} i(\lone\mu+ 3\e_i + 2\e_i k_i)\right).$$
Consider a ball $B(x,r)$ where $\Pi\omega=x$. It follows from the 
choice of $r$ above that $B(x,r)$ can intersect at most $3$ cylinders 
of length $n_q$ which carry positive $\eta$-measure. Thus, again using the 
definition of $r$ together with (\ref{ent0}), we obtain
$$\frac{\log\nu(B(x,r))}{\log r}\geq\frac{\log 3+q\,\log(1-\delta)
+\sum_{i=1}^q i (\hone\mu-\e_i)}{\sum_{i=1}^{q} i(\lone\mu+ 3\e_i 
+ 2\e_i k_i)+(q+1)(\lone\mu+ 3\e_{q+1} + 2\e_{q+1} k_{q+1})}.$$
Using condition (\ref{dominance}) we can observe that this is of the form
$$\frac{-\ordo{q^2}+\sum_{i=1}^q i (\hone\mu-\ordo{1})}{\ordo{q^2}+
\sum_{i=1}^{q} i(\lone\mu+ \ordo{1})}.$$
 Since $r\to 0$ as $q\rightarrow\infty$, this means for $\nu$ 
almost all $x$
$$\lim_{r\to 0+}\frac{\log\nu(B(x,r))}{\log r}\geq
\frac{\hone\mu}{\lone\mu},$$
which completes the proof.
\end{proof}
\noindent {\bf Acknowledgment.} The second author was supported by the 
EPSRC. The third author was supported by a STINT Fellowship to visit Amherst 
College during the Fall Semester of 2007.



\begin{thebibliography}{999}
\bibitem{BI} L. Barreira and G. Iommi, {\em Parabolic horseshoes}, preprint.
\bibitem{BS} L.\ Barreira and B.\ Saussol, {\em Variational principles
    and mixed multifractal spectra}, Trans.\ Amer.\ Math.\ Soc.\
  {\bf 353} (2001), no.\ 10, 3919--3944.
\bibitem{B} W. Byrne,  Multifractal Analysis of Parabolic Rational
  Maps, Phd Thesis, The University of North Texas.
\bibitem{GR} K.\ Gelfert and M.\ Rams, {\em Multifractal analysis of
    Lyapunov exponents of parabolic iterated function systems},
  preprint at http://www.impan.gov.pl/$\%$7Erams/mfanuh20.ps.gz
\bibitem{HMU}
P. Hanus, D. Mauldin and M. Urbanski,
{\em Thermodynamic formalism and multifractal analysis of conformal infinite iterated function systems.}, 
Acta Math. Hungar. \textbf{96} (2002), no. 1-2, 27--98.


\bibitem{HR} F.\ Hofbauer and P.\ Raith, {\em The Hausdorff
    dimension of an ergodic invariant measure for a piecewise
    monotonic map of the interval}, Canad.\ Math.\ Bull.\ {\bf 35}
  (1992), no.\ 1, 84--98.
\bibitem{Katok} A. Katok, {\em Lyapunov exponents, entropy and
    periodic orbits for diffeomorphisms}, Inst. Hautes Études
  Sci. Publ. Math.\ \textbf{51} (1980), 137--173.


\bibitem{KS}
 M. Kesseb\"{o}hmer, B. Stratmann. {\em A multifractal formalism for 
growth rates and applications to geometrically finite Kleinian groups}, 
Ergodic Theory and Dynam.\ Systems {\bf 24} (2004), no. 1, 141--170. 
\bibitem{N}K. Nakaishi, Multifractal formalism for some parabolic
  maps, Ergodic theory and dynamical systems, {\bf 20} (2000),
  843-857.
\bibitem{O} L.\ Olsen, {\em Multifractal analysis of divergence
    points of deformed measure theoretical Birkhoff averages}, J.\
  Math.\ Pures Appl.\ {\bf 82} (2003), 1591--1649.
\bibitem{P} Y.\ Pesin, Dimension Theory in Dynamical Systems,
  Contemporary Views and Applications, Chicago Lectures in
  Mathematics. University of Chicago Press, Chicago 1997.
\bibitem{PW} Y.\ Pesin and H.\ Weiss, {\em The multifractal analysis
    of Birkhoff averages and large deviations}, Global analysis of
  dynamical systems, 419--431, Inst.\ Phys., Bristol, 2001.
\bibitem{VerbitskyTakens}
F.\ Takens and E.\ Verbitskiy, {\em On the variational principle for the 
topological entropy of certain non-compact sets},
Ergodic Theory and Dynam.\ Systems {\bf 23} (2003), no.\ 1, 317--348.
\bibitem{U}M. Urba\'nski, Parabolic Cantor sets. Fund.\ Math.\ {\bf 151}
(1996), no.\ 3, 241--277.
\bibitem{W} P.\ Walters, An Introduction to Ergodic Theory,
  Springer, 1982.
 
\end{thebibliography}
\end{document}